\newtheorem{theorem}{Theorem}[section]
\newtheorem{corollary}[theorem]{Corollary}
\newtheorem{lemma}[theorem]{Lemma}
\newtheorem{Example}[theorem]{Example}
\newtheorem{proposition}[theorem]{Proposition}
\numberwithin{equation}{section}
\newtheorem{remark}[theorem]{Remark}
\begin{document}

 \title{Positive Self-Dual Hopf Algebras of Galois characters}
\author{Farid Aliniaeifard }
\address[Farid Aliniaeifard]
{Department of Mathematics \\University of Colorado {\bf Boulder}\\
Boulder, 80309\\
USA}
\email{farid.aliniaeifard@colorado.edu}
\author{Shawn Burkett}
\address[Shawn Burkett]
{Department of Mathematics \\University of Colorado {\bf Boulder}\\
	Boulder, 80309\\
	USA}
\email{shawn.burkett@colorado.edu }

\maketitle

\begin{abstract}
{
By using the action of certain Galois groups on complex irreducible characters and conjugacy classes, we define the Galois characters and Galois classes.
We will introduce a set of Galois characters, called Galois irreducible characters, and we show that each Galois character is a non-negative linear combination of the Galois irreducible characters. It is shown that whenever the complex characters of the groups of a tower produce a positive self-dual Hopf algebra (PSH), Galois characters of the groups of the tower also produce a PSH. Then we will classify the Galois characters and Galois classes of the general linear groups over finite fields. In the end, we will precisely indicate the isomorphism between the PSH of Galois characters and a certain tensor product of positive self-dual Hopf algebras.	\vspace{3mm}\\
{\it\bf Key Words:} Graded Hopf algebras; symmetric functions; character theory, supercharacter theory; Galois groups.\\
{\bf Mathematics Subject Classification:}   05E05, 05E10, 5E18, 20C30.
 }
\end{abstract}


\section[Introduction]{Introduction}
Character theory is at the heart of mathematics with many applications in other sciences such as physics and chemistry. Given a finite group $G$, fix a positive integer $d$ that divides $|G|$.
Galois characters of the group $G$ are those characters with the image in $\mathbb{Q}[\zeta_d]$, where $\zeta_d$ is a $d$th primitive root of unity. It seems that Galois characters are playing a significant role in the application of character theory.  All characters of any group are Galois where $d$ is the exponent of $G $, the least common multiple of the orders of all elements of $G$. The character theory of the
symmetric groups and symmetric function theory are identical. This connection is beautifully presented in
Macdonald’s book \cite{MC}. Geissinger and Zelevinsky independently accomplished that these two theories have Hopf structures which illuminated the identification \cite{Ge,ZE}. The supercharacters of the supercharacter theory in \cite{DI08} for unipotent-upper triangular matrices over a finite field $\mathbb{F}_{p^n}$ are some Galois characters where  $d=p$.
In \cite{AABB12,BT} the authors constructed a Hopf algebra of supercharacters and identified the Hopf algebra of supercharacters of unipotent upper-triangular matrices with the Hopf algebra of symmetric functions in noncommutating variables. We will  construct some positive self-dual Hopf algebras that are identical to the Hopf algebra of Galois characters of tower of general linear groups over a finite field $\mathbb{F}_q$.\\

A character $\chi$ is a {\it $d$-Galois character} of $G$ if $\chi$ is a character of $G$ and $\chi(g)\in \mathbb{Q}[\zeta_d]$ for every $g\in G$. Note that when $\mathrm{exp}(G)\mathop{|}d$, the set of $d$-Galois characters is the same as the set of all characters, and when $d=1$ the set of $1$-Galois characters is the same as the set of integral characters, the characters with image in $\mathbb{Q}$.
Every $d$-Galois character is a non-negative linear combination of a special set of characters called $d$-Galois irreducible characters (see Theorem \ref{GCI}).
If an irreducible character $\psi$ is a constituent of a $d$-Galois irreducible character $\chi$, then there is a constant $m_d(\psi)$ such that  $$m_d(\psi)\chi$$ is an irreducible character corresponding to an irreducible  module over $\mathbb{Q}[\zeta_d]$.  The constant $m_{d}(\psi)$ is called the Schur index of $\psi$ over $\mathbb{Q}[\zeta_d]$. For more details see \cite[Sections 9 and 10]{IS}. Let $p$ be a prime and $d$ be a power of $p$. When $G$ is a $p$-group, $p>2$, any $d$-Galois character is afforded by a  $\mathbb{Q}[\zeta_d]$-representation of $G$ (See \cite[Corollary (10.14)]{IS}). \\

 We will show that whenever the space $\bigoplus_{n\geq 0}\mathsf{cf}(G_n)$ of class functions of a tower of groups $(G_0\leq G_1\leq G_2 \leq G_3 \leq \ldots)$ has a PSH structure, then the Galois class functions of the groups of the tower also have a PSH structure. We will study the Galois characters of some families of groups, especially the general linear groups over a finite field $\mathbb{F}_q$. After classifying the Galois characters and Galois classes, we will find the cuspidal Galois characters, which precisely give us the isomorphism from the PSH of Galois class functions  of the tower of general linear groups to the tensor product of the certain positive self-dual Hopf algebras. 

In section \ref{supercharacter theory}, we give a definition of the supercharacter theory. In section \ref{Galois character theory}, we introduce the Galois character theory. Section \ref{Hopf algebra} will be devoted to the Hopf structures that Galois characters produce, and we will investigate the properties that the Hopf algebra of Galois characters inherited from the Hopf algebra of complex characters. In the last section, we will parametrize the Galois characters and Galois classes of the general linear groups over a finite field $\mathbb{F}_q$, and we identify the PSH structure of Galois characters of general linear groups.

\section{Supercharacter theory}\label{supercharacter theory}

We reproduce the definition of supercharacter theory in \cite{DI08}.

A {\it supercharacter theory} of a finite group $G$ is a pair $(\mathcal{X},\mathcal{K})$, where $\mathcal{K}$ is a partition of $G$ and $\mathcal{X}$ is a partition of ${\rm Irr}(G)$ 
such that:
\begin{enumerate}
	\item The set $\{1\}$ is a member of $\mathcal{K}$.
	\item $|\mathcal{X}|=|\mathcal{K}|$.
	\item The characters $\sigma_X$ are constant on the parts of $\mathcal{K}$.
\end{enumerate}
We refer to the characters $\sigma_X$ as {\it supercharacters} and to the members of $\mathcal{K}$ as {\it superclasses}.

Every finite group $G$ has two trivial supercharacter theories: the usual irreducible character theory
and the supercharacter theory $(\{\{\mathds{1}\},\mathsf{Irr}(G)\setminus\{\mathds{1}\}\},\{ \{1\}, G \setminus \{1\}\} )$, where
$\mathds{1}$ is the trivial character of $G$. However, classifying the set of all  non-trivial supercharacter theories for most groups is still an open problem.

\section{Galois Character Theory }\label{Galois character theory}
Let $d$ be a positive integer such that $d\mathop{|}|G|$ and let $\zeta_d$ be a primitive $d$th root of unity. As we said a $d$-Galois character $\chi$ for a finite group $G$ is a character whose image is in $\mathbb{Q}[\zeta_d]\subseteq \mathbb{C}$.
Let $\mathsf{Irr}(G)$ be the set of irreducible characters of $G$. Irreducible characters  are building blocks of complex characters, and each complex character can be uniquely  written as a non-negative linear combination of irreducible characters, that is $\chi=m_1\psi_1+\ldots+m_k\psi_k$ where each $m_i$ is a positive integer, and each $\psi_i$ is an irreducible character. In this section, we introduce a set of $d$-Galois characters, called  $d$-Galois irreducible characters,  that each $d$-Galois character can be uniquely  written as a non-negative linear combination of $d$-Galois irreducible characters.
\\

Let $G$ be a group of order $n$ and $d\mathop{|}n$, throughout this paper let 
$$\mathsf{Gal}(n,d):=\mathsf{Gal}_{\mathbb{Q}[\zeta_d]} {\mathbb{Q}[\zeta_n]},$$ the Galois group of ${\mathbb{Q}[\zeta_n]}$ over ${\mathbb{Q}[\zeta_d]}$.  There is an action of $\mathsf{ Gal}{(n,d)}$ on $\mathsf{ Irr}(G)$ defined by $\sigma.\chi=\sigma(\chi)$ for every $\sigma\in \mathsf{ Gal}{(n,d)}$ and $\chi\in \mathsf{ Irr}(G)$.  We denote by $\mathsf{O}_{d}(\psi)$ the orbit of an irreducible character $\psi\in \mathsf{Irr}(G)$.  Also, there is a compatible action on the conjugacy classes of $G$, as follows. For every $\sigma\in \mathsf{Gal}{(n,d)}$, there is a unique positive integer $r<n$ coprime to $n$ such that $\sigma(\zeta_n)=\zeta_n^r$, and we let $\sigma.g=g^r$. Let $\mathsf{ GIrr}_d(G)$ be the set of  $\mathsf{ Gal}{(n,d)}$-orbit sums on $\mathsf{ Irr}(G)$, i.e., 
 $$\mathsf{ GIrr}_d(G)=\left\lbrace  \sum_{\phi\in \mathsf{O}_{d}(\psi) }\phi: \psi\in \mathsf{ Irr}(G) \right\rbrace $$ and let $\mathsf{GCl}_d(G)$ be the set of unions of the $\mathsf{ Gal}{(n,d)}$-orbits on the conjugacy classes of $G$. Then by \cite{DI08}, $$( \{ \mathsf{O}_{d}(\chi): \chi \in \mathsf{Irr}(G)  \}  ,\mathsf{GCl}_d(G))$$ is a supercharacter theory. We call the elements of $\mathsf{ GIrr}_d(G)$ the {\it $d$-Galois irreducible characters } and the elements of $\mathsf{GCl}_d$ the {\it $d$-Galois classes} of $G$. Note that if two irreducible characters $\chi$ and $\psi$ are the constituents of the same supercharacter, they have the same degree since $\chi(1)=\sigma(\psi(1))=\psi(1)$ for every element $\sigma$ in the Galois group. Remark that when we have a smaller Galois group, we have a finer supercharacter theory, a supercharacter theory with smaller superclasses.
\\

\begin{theorem}\label{GCI}
	Let $G$ be a group of order $n$ and $d\mathop{|}|G|$. If $\chi$ is a $d$-Galois character, then $\chi$ is a non-negative linear combination of the characters in $\mathsf{GIrr}_d(G)$, i.e., the  supercharacters of the supercharacter theory corresponding to
	$\mathsf{ Gal}{(n,d)},$ 
\end{theorem}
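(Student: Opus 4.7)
The plan is to use uniqueness of the decomposition of $\chi$ into complex irreducibles together with Galois invariance of the values of $\chi$. Write
$$\chi=\sum_{\psi\in\mathsf{Irr}(G)}m_{\psi}\,\psi,$$
where each $m_{\psi}\in\mathbb{Z}_{\geq 0}$ because $\chi$ is a bona fide character of $G$. The goal is to prove that the multiplicity function $\psi\mapsto m_{\psi}$ is constant on every $\mathsf{Gal}(n,d)$-orbit in $\mathsf{Irr}(G)$; once this is established, collecting terms orbit-by-orbit rewrites $\chi$ as a non-negative integer combination of orbit sums, which by definition are the elements of $\mathsf{GIrr}_d(G)$.

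For the key step, fix $\sigma\in\mathsf{Gal}(n,d)$ and apply $\sigma$ entrywise to the evaluation $\chi(g)=\sum_{\psi}m_{\psi}\psi(g)$ at an arbitrary $g\in G$. Since $\chi$ takes values in $\mathbb{Q}[\zeta_d]$ and $\sigma$ fixes $\mathbb{Q}[\zeta_d]$ pointwise by the very definition of $\mathsf{Gal}(n,d)=\mathsf{Gal}_{\mathbb{Q}[\zeta_d]}\mathbb{Q}[\zeta_n]$, the left side is unchanged. On the right, the integers $m_{\psi}$ are fixed by $\sigma$, and $\sigma.\psi$ is again an element of $\mathsf{Irr}(G)$ (its values lie in $\mathbb{Q}[\zeta_n]$ and the action of the paper is just the usual Galois action on characters, which preserves irreducibility and degree). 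Hence
$$\sum_{\psi\in\mathsf{Irr}(G)}m_{\psi}\,\psi\;=\;\chi\;=\;\sum_{\psi\in\mathsf{Irr}(G)}m_{\psi}\,(\sigma.\psi)\;=\;\sum_{\psi\in\mathsf{Irr}(G)}m_{\sigma^{-1}.\psi}\,\psi.$$
By uniqueness of the decomposition of a class function into the linearly independent basis $\mathsf{Irr}(G)$, we conclude $m_{\psi}=m_{\sigma.\psi}$ for every $\psi$ and every $\sigma$, which is exactly $\mathsf{Gal}(n,d)$-invariance of the multiplicity function.

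Regrouping, let $\mathcal{O}$ range over the $\mathsf{Gal}(n,d)$-orbits on $\mathsf{Irr}(G)$ and let $m_{\mathcal{O}}$ denote the common value of $m_{\psi}$ for $\psi\in\mathcal{O}$; then
$$\chi=\sum_{\mathcal{O}}m_{\mathcal{O}}\Bigl(\sum_{\psi\in\mathcal{O}}\psi\Bigr),$$
which is the desired expression as a non-negative integer combination of elements of $\mathsf{GIrr}_d(G)$. I do not anticipate a real obstacle: the only small checks are that the Galois action permutes $\mathsf{Irr}(G)$ (standard, via preservation of the inner product) and that $\sigma$ fixes the coefficients $m_{\psi}\in\mathbb{Z}$, both of which are built into the setup. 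Uniqueness of this expression, if desired, follows from the linear independence of distinct orbit sums, which is inherited from the linear independence of $\mathsf{Irr}(G)$.
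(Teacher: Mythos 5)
Your proof is correct and follows essentially the same route as the paper: both start from $\chi=\sigma(\chi)$, use that $\sigma$ permutes $\mathsf{Irr}(G)$, and conclude that the multiplicities are constant on $\mathsf{Gal}(n,d)$-orbits before regrouping into orbit sums. The only cosmetic difference is that you extract the equality of multiplicities from linear independence of $\mathsf{Irr}(G)$, whereas the paper does it via invariance of the inner product $\langle\chi,\psi_i\rangle=\langle\chi,\sigma(\psi_i)\rangle$; these are interchangeable.
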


\begin{proof}
	Let  $\chi$ be a $d$-Galois character and let $\chi=m_1\psi_1+m_2 \psi_2+\ldots+m_t \psi_t$, where each $m_i$ is a positive integer and each $\psi_i$ is an irreducible character. Note that for every $\sigma\in {\rm Gal}{(n,d)}$, $$\chi=\sigma(\chi)=m_1 \sigma(\psi_1)+m_2 \sigma(\psi_2)+\cdots+ m_t\sigma(\psi_t),$$ and so for each $i$, $$\langle \chi, \psi_i \rangle=\langle \sigma(\chi),\sigma(\psi_i) \rangle= \langle \chi,\sigma(\psi_i) \rangle.$$ Therefore, for every $\sigma$ and $i$, $\sigma(\psi_i)$ is a constituent of $\chi$, and  the coefficients of $\psi_i$ and $\sigma(\psi_i)$ are the same positive integers. 
	As the supercharacters of the supercharacter theory corresponding to $\mathsf{Gal}{(n,d)}$ are of the form $\sum_{\phi\in \mathsf{O}_{d}(\psi)}\phi$ for an irreducible character $\psi$, we can  conclude that $\chi$ is a non-negative linear combination of the characters in $\mathsf{ GIrr}_d(G)$. 
\end{proof}

	In other words, the theorem above shows that for the set of all $d$-Galois characters,  {\it $d$-Galois irreducible characters} are playing a same role as the role of the irreducible characters in the set of all complex characters.   More precisely, the set of $d$-Galois irreducible character is
	$$\mathsf{ GIrr}_d(G)= \left\lbrace  \sum_{\phi\in \mathsf{O}_{d}(\psi)}\phi: \psi\in \mathsf{ Irr}(G) \right\rbrace.$$
	If $|G|\mathop{|}d$, by convention, we let $\mathsf{ GIrr}_d(G)$ be the set of all irreducible characters. 
	
\begin{remark}
  For any $d$-Galois irreducible character $\chi=\sum_{\phi\in \mathsf{O}_{d}(\psi)}\phi \in \mathsf{GIrr}_d(G)$, there is a constant $m_{d}(\psi)$ such that  $m_{d}(\psi)\chi$ is an irreducible character corresponding to an irreducible  module over $\mathbb{Q}[\zeta_d]$.   The constant $m_{d}(\psi)$ is called the Schur index of $\psi$ over $\mathbb{Q}[\zeta_d]$. For more details on Schur indices see \cite[Sections 9 and 10]{IS}.
\end{remark}

A supercharacter theory is called a {\it $d$-Galois supercharacter theory} if the images of the supercharacters lie on the field $\mathbb{Q}[\zeta_d]$. As a corollary of the last theorem we also have the following. 
\begin{corollary}\label{EK}
	Let $G$ be a group of order $n$, and let $d|n$. The finest $d$-Galois supercharacter theory is unique and is the one that comes from the action of $\mathsf{Gal}{(n,d)}$ on the irreducible characters and conjugacy classes, i.e., $$( \mathsf{GIrr}_d(G), \mathsf{GCl}_d ).$$
\end{corollary}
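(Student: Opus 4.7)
The plan is to establish two things: first, that $(\mathsf{GIrr}_d(G), \mathsf{GCl}_d)$ is itself a $d$-Galois supercharacter theory, and second, that every $d$-Galois supercharacter theory $(\mathcal{X}, \mathcal{K})$ is coarser than it on both sides of the partition. That $(\mathsf{GIrr}_d(G), \mathsf{GCl}_d)$ is a supercharacter theory is already asserted in the text following \cite{DI08}; its supercharacters, being sums $\sum_{\phi \in \mathsf{O}_d(\psi)} \phi$ over $\mathsf{Gal}(n,d)$-orbits, are manifestly fixed by $\mathsf{Gal}(n,d)$, so by Galois theory their values lie in $\mathbb{Q}[\zeta_d]$.

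For the character side, I would fix an arbitrary $d$-Galois supercharacter theory $(\mathcal{X}, \mathcal{K})$, pick $X \in \mathcal{X}$, and consider its supercharacter $\sigma_X$. Since $\sigma_X$ takes values in $\mathbb{Q}[\zeta_d]$ it is fixed by every $\sigma \in \mathsf{Gal}(n,d)$. Writing $\sigma_X = \sum_{\psi \in X} c_\psi \psi$ with $c_\psi > 0$ and repeating the inner-product argument used in the proof of Theorem \ref{GCI}, one concludes that $c_\psi = c_{\sigma(\psi)}$ for every $\sigma$, so in particular $X$ is stable under the Galois action. Hence each $X \in \mathcal{X}$ is a union of orbits $\mathsf{O}_d(\psi)$, making $\mathcal{X}$ coarser than $\{\mathsf{O}_d(\chi) : \chi \in \mathsf{Irr}(G)\}$.

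For the superclass side, I would invoke the standard fact that the $|\mathcal{X}|$ supercharacters and the $|\mathcal{K}|$ characteristic functions $\mathds{1}_K$ span the same subspace of $\mathsf{cf}(G)$ (both are linearly independent families of class functions constant on $\mathcal{K}$, of the same cardinality). Since each $\sigma_X$ is $\mathsf{Gal}(n,d)$-invariant, every element of this common span is Galois-invariant, in particular each $\mathds{1}_K$; this forces each superclass $K$ to be a union of $\mathsf{Gal}(n,d)$-orbits on conjugacy classes, so $\mathcal{K}$ is coarser than $\mathsf{GCl}_d$. Combining the two inclusions shows $(\mathsf{GIrr}_d(G), \mathsf{GCl}_d)$ refines every $d$-Galois supercharacter theory; being itself one, it is the finest, and uniqueness is automatic since any two finest theories would have to mutually refine one another. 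The main subtlety to articulate carefully is the equality of spans $\mathrm{span}\{\sigma_X\} = \mathrm{span}\{\mathds{1}_K\}$, which transfers Galois invariance from the character side to the class side.
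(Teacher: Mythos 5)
Your argument is correct and is essentially the route the paper intends: the corollary is stated there without proof as an immediate consequence of Theorem \ref{GCI}, and your character-side step is exactly an application of that theorem (its inner-product argument) to each supercharacter $\sigma_X$, while your span identity $\mathrm{span}\{\sigma_X\}=\mathrm{span}\{\mathds{1}_K\}$ is the standard Diaconis--Isaacs duality that supplies the superclass side. The one point to phrase carefully is that the invariance you transfer across the span must be invariance under the $\mathbb{C}$-linear action $f(g)\mapsto f(g^{r})$ (which agrees with applying $\sigma$ to values only for class functions with Galois-fixed coefficients, e.g.\ genuine characters), since applying $\sigma$ to values is not $\mathbb{C}$-linear and hence does not automatically pass to the complex span.
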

We call a function from $G$ to $\mathbb{C}$ a $d$-{\it Galois class function} if it is constant on the elements of  $\mathsf{GCl}_d$, and the set of all $d$-Galois class functions is denoted by $\mathsf{Gcf}_d(G)$. Note that $$\mathsf{Gcf}_d(G)=\mathbb{C}\text{-span}\{ \chi: \chi \in \mathsf{Glrr}_d(G) \}.$$

\begin{remark}{\rm
	$1$-Galois supercharacter theories have been studied by Keller \cite{Ke}, and he showed that the finest integral supercharacter theory is unique.  Corollary  \ref{EK} extends his result.}
\end{remark}

{\rm \begin{Example}
		\begin{enumerate}
			\item
			Let $q$ be a power of the prime number $p$. The  supercharacter theory for unipotent upper-triangular matrices over a finite field $\mathbb{F}_q$ introduced in \cite{DI08} is a $p$-Galois supercharacter theory.
			\item The nonnesting supercharacter theory introduced in \cite{An} is also a $p$-Galois supercharacter theory.
			\item Every normal supercharacter theory for $G$ is a  $1$-Galois supercharacter theory (see \cite{AL}).
		\end{enumerate}
\end{Example}}

\section{Positive self-dual Hopf algebras}\label{SPSH}

A graded connected Hopf algebra $\mathsf{H}$ over $\mathbb{C}$ with a distinguished $\mathbb{C}$-basis $\{ \beta_{\lambda} \}$ consisting of
homogeneous elements is a {\it  positive self-dual} Hopf algebra (or PSH) if it satisfies the two other axioms,
\begin{enumerate}
	\item ({\it self-duality}) The same structure constants $c_{\mu,\nu}^\lambda$ appear for the product $$m(\beta_\mu\otimes \beta_\nu)=\sum_{\lambda} c_{\mu,\nu}^\lambda \beta_\lambda$$ and the coproduct $$\Delta( \beta_\lambda )=\sum_{\mu,\nu} c_{\mu,\nu}^\lambda \beta_\mu \otimes \beta_\nu.$$
	\item ({\it positivity}) The structure constants $c_{\mu,\nu}^\lambda$ are all non-negative real numbers.
\end{enumerate}
The basis $\{ \beta_{\lambda} \}$ is called the {\it PSH-basis} of $\mathsf{H}$.
\\

Given a PSH $\mathsf{H}$ with PSH-basis $\Sigma=\{ \beta_\lambda \}$, we define a bilinear form $$\langle ., . \rangle: \mathsf{H}\otimes \mathsf{H} \rightarrow \mathbb{C}$$ on $\mathsf{H}$ that makes this basis orthonormal. Similarly, the elements $\{ \beta_\lambda \otimes \beta_\mu \}$ give an orthonormal basis for a form $\langle . , .\rangle_{\mathsf{H}\otimes \mathsf{H}}$  on $\mathsf{H} \otimes \mathsf{H}$. Consider that for  counit $\epsilon$ of   the PSH $\mathsf{H}=\bigoplus_{n\geq 0} \mathsf{H}_n$, we have $$I:=\ker \epsilon= \bigoplus_{n>0}\mathsf{H}_n$$ is a two sided ideal, and for any $x\in I$ we have $$\Delta(x)=1\otimes x+x\otimes 1+\Delta_+(x),$$ where $\Delta_+(x)\in I \otimes I$. Any element of $x\in I $ with $\Delta_+(x)=0$ is called a {\it primitive element}. 
Let $\mathfrak{p}$ be the $\mathbb{C}$-submodule of primitive elements inside $\mathsf{H}$. Note that the orthogonal complement of $m(I\otimes I)$ in $I$ is the set of elements $x$ such that for every $y\in I\otimes I$,
$$0=\langle x,m(y)\rangle_\mathsf{H}=\langle \Delta(x), y \rangle_{\mathsf{H}\otimes \mathsf{H}}=\langle \Delta_+(x), y \rangle_{\mathsf{H}\otimes \mathsf{H}}.$$
Since the above expression must be true for each $y\in I\otimes I$, we must have $\Delta_+(x)=0$. Therefore, the complement of $m( I \otimes I)$ in $I$ is $\mathfrak{p}$. Consequently, by basic linear algebra we have $I= \mathfrak{p}\oplus m(I \otimes I)$. Now by \cite[Appendix 1, Theorem AI.I]{ZE} we have $\mathsf{H}$ is isomorphic to the symmetric algebra of the space of its primitive elements $\mathsf{Sym}(\mathfrak{p})$. In other words: 

\begin{center}
	\fbox{\begin{minipage}{30em}
			\begin{center}
				The Hopf algebra $\mathsf{H}$ decomposes into the tensor product $$\mathsf{H}=\bigotimes_{p\in A} \mathsf{H}_p$$ where each Hopf algebra $\mathsf{H}_p$ has only one primitive element. Also, $\mathsf{H}_p$ is isomorphic to the algebra $K[x]$ of polynomials in one intermediate $x$, where ${\rm deg}(x)=p$ and $x$ is the primitive element.
				Therefore, $\mathsf{H}$ is unique up to isomorphism.
			\end{center}
	\end{minipage}}
\end{center}
Let $\mathcal{C}=\mathfrak{p}\cap \Sigma$,  the intersection of the $\mathbb{C}$-submodule of primitive elements $\mathfrak{p}$ and the PSH-basis $\Sigma$ of the PSH $\mathsf{H}$.  
Then $$\mathsf{H}\cong\bigotimes_{\rho\in \mathcal{C}}\mathsf{H}(\rho),$$ where $\mathsf{H}(\rho)$ is the $\mathbb{C}$-module spanned by $$ \Sigma(\rho):=\{ \beta\in \Sigma: \text{~ there exists $n\geq 0$ with $\langle \beta,\rho^n\rangle\neq 0$} \}.$$

\begin{remark}
	{\rm 
	{\rm (1)}	In \cite{ZE}, Zelevinsky developed a theory for self-dual Hopf algebra over $\mathbb{Z}$, where the constant coefficients of product and coproduct for a distinguished basis $\Sigma$ are non-negative integers. We now briefly mention Zelevinsky's theory. Given such a self-dual Hopf algebra $\mathsf{H}$ with the distinguished basis $\Sigma$, let $\mathcal{C}$ be the intersection of the basis $\Sigma$ and the $\mathbb{Z}$-submodule of primitive elements. For each $\rho\in \mathcal{C}$, let $\mathsf{H}(\rho)$ be the $\mathbb{Z}$-span of $$ \Sigma(\rho):=\{ \beta\in \Sigma: \text{~ there exists $n\geq 0$ with $\langle \beta,\rho^n\rangle\neq 0$} \}.$$ Each $\mathsf{H}(\rho)$ is isomorphic to the Hopf algebra of symmetric functions $\mathsf{Sym}$. The Zelevinsky's theorem \cite[Theorem 2.2]{ZE} shows that $\mathsf{H}$ has a canonical tensor product  decomposition
	$$\mathsf{H}=\bigotimes_{\rho\in \mathcal{C}}\mathsf{H}(\rho).$$ 
	
	\noindent {\rm (2)}	Despite Zelevisnsky's structure theory, in the theory of positive self-dual Hopf algebras over $\mathbb{C}$ that we mentioned earlier, $\mathsf{H}(\rho)$ is not necessarily isomorphic to the ring of symmetric functions $\mathsf{Sym}$. Moreover, $\mathsf{H}(\rho)$ is not unique up to isomorphism and it depends on the PSH-basis for $\mathsf{H}$.   For example if the power sum basis 
	$$\Sigma=\{ p_\lambda :\lambda  \text{~is a partition} \}$$ is the PSH-basis for $\mathsf{Sym}.$ Then
	$$\mathcal{C}=\Sigma\cap \mathfrak{p}=\{p_n: n\geq 1  \},$$ and
	$$\mathsf{Sym}(p_n)=\mathbb{C}\text{-span}\{ p_{(n^k)}:k\geq 1 \},$$ where $(n^k)$ is a partition of $kn$, with $k$ parts all equal to $n$. Consider that $\mathsf{Sym}(p_n)$ is not the ring of symmetric functions. However, if we choose the set of Schur functions $\{ S_\lambda: \lambda \text{~is a partition} \}$ as the PSH-basis for $\mathsf{Sym}$, then $\mathcal{C}=\{ p_1 \}$, and $\mathsf{Sym}(p_1)=\mathsf{Sym}$.}
\end{remark}

\section{Some Hopf algebras from Galois character theory}\label{Hopf algebra}
The character algebras of some families of groups  are endowed with Hopf structures; general linear groups and unipotent upper-tringular matrices have this property (see \cite{ABT} and \cite{GR}). In this section, we study the properties that the Hopf algebra of $d$-Galois class functions  inherits from the Hopf algebra of class functions. We will show that if the Hopf algebra of class functions is a PSH, then so is the Hopf algebra of  $d$-Galois class functions.
\\

Consider a tower of groups
$$G_*=( G_0< G_1<G_2<G_3< \cdots ).$$
Let $$\mathsf{H}(G_*)=\bigoplus_{n\geq 0} {\mathsf{H}(G_n)},$$ where $$\mathsf{H}({G_n})=\mathbb{C}\text{-span}\{ \psi:\psi\in\mathsf{ Irr}(G_n) \}.$$

 Let $H$ be a subgroup of a given group $G$, and $N$ a normal subgroup of $G$. Let $\chi$ be a character of $G$, $\psi$ a character for $H$, and $\phi$ a character of $G/N$. Consider the following functions,
%

%
%
%

\begin{equation}\label{product}
\left( {\rm Ind}_{H}^{G} \psi\right) (g)=\frac{1}{|H|}\sum_{ \substack{k\in G: \\ kgk^{-1}\in H}}\psi(kgk^{-1}) \hspace{1cm}  \left( {\rm Infl}_{G/N}^{G}\phi\right) (g)=\phi(gN)
\end{equation}
\begin{equation}\label{coproduct}
\left( {\rm Res}_{H}^{G}\chi \right) (h)=\chi(h)  \hspace{2cm} \left( {\rm Defl}_{G/N}^G \chi \right)(g)=\frac{1}{|N|}\sum_{k\in gN}\chi(k)
\end{equation}

%

Note that each of the above functions maps a $d$-Galois character to a $d$-Galois character, and moreover, each of the functions yields a functor from the categoty of modules over one group to the category of modules over the other one.
\begin{theorem}\label{PSH}
	Let $G_*$ be a tower of groups and let $d$ be a positive integer such that for some $k$, $G_k\mathop{|}d$ and $d\mathop{|}G_{k+1}$. If $\mathsf{H}(G_*)$ is a graded connected Hopf algebra with a linear extension of compositions of the functions in \ref{product} as product and a  linear extension of compositions of the functions in \ref{coproduct} as coproduct, then 
	\begin{enumerate}
		\item  $$\mathsf{GH}_d(G_*)=\bigoplus_{n\geq 0} {\mathsf{GH}_d(G_n)},$$ where $$\mathsf{GH}_d({G_n})=\mathbb{C}\text{\rm-Span}\{ \psi:\psi\in\mathsf{ GIrr}_d(G_n) \},$$  is a Hopf algebra with the same product and coproduct as $\mathsf{H}(G_*)$. 
		\item If $\mathsf{H}(G_*)$ is a PSH with PSH-basis $\mathsf{Irr}(G)$, then $\mathsf{GH}_d(G_*)$ is a PSH with PSH-basis $$\Sigma_d=\left\lbrace  1/\sqrt{\mathsf{O}_d(\psi)} \sum_{\phi\in \mathsf{O}_d(\psi)}\phi: \psi\in \bigsqcup_{n\geq 0} \mathsf{Irr}(G_n) \right\rbrace .$$
	\end{enumerate}
\end{theorem}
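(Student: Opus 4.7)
The plan is to derive both (1) and (2) from the single observation that each of the four functors appearing in (\ref{product}) and (\ref{coproduct}) commutes with the relevant Galois action. Since $\mathrm{Ind}_H^G$, $\mathrm{Res}_H^G$, $\mathrm{Infl}_{G/N}^G$ and $\mathrm{Defl}_{G/N}^G$ are each defined by a $\mathbb{Q}$-linear averaging of character values, any $\sigma\in\mathsf{Gal}(|G|,d)$ commutes with each of them, e.g.\ $\sigma(\mathrm{Ind}_H^G\chi)=\mathrm{Ind}_H^G(\sigma\chi)$. Because $|G_m|$ and $|G_n|$ both divide $|G_{m+n}|$, the natural surjections $\mathsf{Gal}(|G_{m+n}|,d)\twoheadrightarrow\mathsf{Gal}(|G_n|,d)$ make the actions at the different levels of the tower compatible. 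For $\alpha\in\mathsf{Irr}(G_m)$ and $\beta\in\mathsf{Irr}(G_n)$, it then follows that the element $\sum_{\phi\in\mathsf{O}_d(\alpha),\,\psi\in\mathsf{O}_d(\beta)}\phi\cdot\psi$ of $\mathsf{H}(G_{m+n})$ is $\mathsf{Gal}(|G_{m+n}|,d)$-invariant, hence a $d$-Galois character. By Theorem \ref{GCI} it expands non-negatively in $\mathsf{GIrr}_d(G_{m+n})$, giving closure of $\mathsf{GH}_d(G_*)$ under the product, and the same argument applied dually gives closure under $\Delta$; the unit, counit and antipode of $\mathsf{H}(G_*)$ restrict without change, settling (1).

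For (2), I would first verify orthonormality of $\Sigma_d$ with respect to the Hall--Frobenius form on $\mathsf{H}(G_*)$ that makes $\bigsqcup_n\mathsf{Irr}(G_n)$ orthonormal: distinct Galois orbits yield orthogonal normalized orbit sums, and the chosen factor $1/\sqrt{|\mathsf{O}_d(\psi)|}$ is exactly what is needed to make each orbit sum have norm one. Self-duality of $\mathsf{GH}_d(G_*)$ in the basis $\Sigma_d$ is then automatic from that of $\mathsf{H}(G_*)$: the adjointness $\langle xy,z\rangle=\langle x\otimes y,\Delta z\rangle$ holds for all $x,y,z\in\mathsf{H}(G_*)$ by assumption, so restricting to the orthonormal basis $\Sigma_d$ of the Hopf subalgebra forces the structure constants of $m$ and $\Delta$ in this basis to coincide.

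The positivity step is where one has to track the normalizing factors carefully. Given $a_\alpha,a_\beta\in\Sigma_d$, the calculation above expresses $\sum_{\phi,\psi}\phi\psi$ as $\sum_O n_O\bigl(\sum_{\gamma\in O}\gamma\bigr)$, the sum running over $\mathsf{Gal}(|G_{m+n}|,d)$-orbits $O$ on $\mathsf{Irr}(G_{m+n})$ and with each $n_O\in\mathbb{Z}_{\ge 0}$, since the irreducible-basis structure constants of $\mathsf{H}(G_*)$ are non-negative integers by the PSH hypothesis. Rewriting each inner sum as $\sqrt{|O|}$ times the corresponding element of $\Sigma_d$ and dividing by $\sqrt{|\mathsf{O}_d(\alpha)||\mathsf{O}_d(\beta)|}$ then exhibits the $\Sigma_d$-structure constants of the product as the non-negative reals $n_O\sqrt{|O|}/\sqrt{|\mathsf{O}_d(\alpha)||\mathsf{O}_d(\beta)|}$; by self-duality the same non-negativity holds for the coproduct. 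The only genuine obstacle I see is keeping track of the three different Galois groups attached to $G_m$, $G_n$ and $G_{m+n}$ in a single product, which is resolved by the observation that orbits on the smaller groups are precisely restrictions of orbits on $G_{m+n}$.
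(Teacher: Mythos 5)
Your proposal is correct and follows essentially the same route as the paper: closure and positivity come from the fact that the four functors in (\ref{product})--(\ref{coproduct}) preserve (indeed commute with) the Galois action so that Theorem \ref{GCI} applies, and self-duality in the basis $\Sigma_d$ follows from orthonormality of the normalized orbit sums together with the adjointness $\langle m(x\otimes y),z\rangle=\langle x\otimes y,\Delta z\rangle$ inherited from $\mathsf{H}(G_*)$. You are in fact somewhat more explicit than the paper about two points it leaves implicit — the equivariance of the functors and the compatibility of the Galois groups $\mathsf{Gal}(|G_m|,d)$, $\mathsf{Gal}(|G_n|,d)$, $\mathsf{Gal}(|G_{m+n}|,d)$ via the restriction surjections — but the underlying argument is the same.
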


\begin{proof} Consider that 
	$$\left\lbrace  1/\sqrt{\mathsf{O}_d(\psi)} \sum_{\phi\in \mathsf{O}_d(\psi)}\phi: \psi\in \bigsqcup_{n\geq 0} \mathsf{Irr}(G_n) \right\rbrace=
	\left\lbrace 1/\sqrt{\langle \chi,\chi \rangle } \chi: \chi \in \bigsqcup_{n\geq 0} \mathsf{GIrr}_d(G_n)     \right\rbrace  .$$
 Let $\psi\in \mathsf{ GIrr}_d (G_i)$ and $\varphi\in \mathsf{GIrr}_d (G_j)$.	 
  We have
 $$m\left( \left( 1/\sqrt{\langle \psi,\psi \rangle}\right)  \phi \otimes \left( 1/\sqrt{\langle \varphi,\varphi\rangle }\right)  \varphi\right) =$$
 $$ 
 \left( 1/\sqrt{\mathsf{O_{d}}(\psi)  }\right) \left(  1/\sqrt{\mathsf{O_{d}}(\varphi)  }  \right) m\left(   \psi \otimes \varphi \right)
 $$
is a non-negative  $\mathbb{R}$-linear combination of $d$-Galois irreducible characters. Similarly, 
$$\Delta \left(  1/\sqrt{\mathsf{O}_{d}(\chi)  } \sum_{\phi\in \mathsf{O}_{d}(\chi) } \phi \right)$$ is a non-negative $\mathbb{R}$-linear combination of $d$-Galois irreducible characters. That exhausts the  positivity.
 It remains to show that $\mathsf{GH}_d(G_*)$ is self dual,	i.e., if for any $\psi\in \mathsf{GIrr}_d (G_i)$, $\phi\in \mathsf{GIrr}_d (G_j)$, and $\chi\in \mathsf{ GIrr}_d (G_{i+j})$,

	$$m( \left( 1/\sqrt{\langle \psi,\psi \rangle }\right)  \psi \otimes \left( 1/\sqrt{\langle \phi,\phi \rangle }\right)  \phi )=\sum c_{\psi,\phi}^\chi \left( 1/\sqrt{\langle \chi,\chi \rangle }\right) \chi,$$ and $$\Delta(\left( 1/\sqrt{\langle \chi,\chi \rangle }\right)\chi)=\sum d_{\psi,\phi}^\chi \left(  1/\sqrt{\langle \psi,\psi \rangle }\right) \psi\otimes \left(  1/\sqrt{\langle \phi,\phi \rangle }\right) \phi,$$ then $$c_{\psi,\phi}^\chi=d_{\psi,\phi}^\chi.$$
	
	We have that $$c_{\psi,\phi}^\chi \left< \left( 1/\sqrt{\langle \chi,\chi \rangle }\right)\chi,\left( 1/\sqrt{\langle \chi,\chi \rangle }\right)\chi \right> =$$
	$$\left< m(\left( 1/\sqrt{\langle \psi,\psi \rangle }\right)\psi \otimes \left( 1/\sqrt{\langle \phi,\phi \rangle }\right)\phi), \left( 1/\sqrt{\langle \chi,\chi \rangle }\right)\chi \right>=$$
	$$\left< \left( 1/\sqrt{\langle \psi,\psi \rangle }\right)\psi \otimes \left( 1/\sqrt{\langle \phi,\phi \rangle }\right)\phi, \Delta(\left( 1/\sqrt{\langle \chi,\chi \rangle }\right)\chi) \right>=$$
	$$d_{\psi,\phi}^\chi  \left< \left( 1/\sqrt{\langle \psi,\psi \rangle }\right)\psi \otimes \left( 1/\sqrt{\langle \phi,\phi \rangle }\right)\phi,\left( 1/\sqrt{\langle \psi,\psi \rangle }\right)\psi \otimes \left( 1/\sqrt{\langle \phi,\phi \rangle }\right)\phi \right>.$$ Note that 
	$$\left< \left( 1/\sqrt{\langle \chi,\chi \rangle }\right)\chi,\left( 1/\sqrt{\langle \chi,\chi \rangle }\right)\chi\right>=$$
	$$\left< \left( 1/\sqrt{\langle \psi,\psi \rangle }\right)\psi \otimes \left( 1/\sqrt{\langle \phi,\phi \rangle }\right)\phi,\left( 1/\sqrt{\langle \psi,\psi \rangle }\right)\psi \otimes \left( 1/\sqrt{\langle \phi,\phi \rangle }\right)\phi \right>=1.$$  We conclude that $c_{\psi,\phi}^\chi=d_{\psi,\phi}^\chi.$  Therefore,
	%
%
	$\mathsf{GH}_d(G_*)$ is a PSH with the PSH-basis $$	\left\lbrace 1/\sqrt{\langle \chi,\chi \rangle } \chi: \chi \in \bigsqcup_{n\geq 0} \mathsf{GIrr}_d(G_n)     \right\rbrace  =\left\lbrace  1/\sqrt{\mathsf{O}_d(\psi)} \sum_{\phi\in \mathsf{O}_d(\psi)}\phi: \psi\in \bigsqcup_{n\geq 0} \mathsf{Irr}(G_n) \right\rbrace .$$
\end{proof}

\begin{Example}

Consider the following towers of groups
$$G_*=( G_0< G_1<G_2<G_3< \cdots )$$
where either
\begin{itemize}
	\item $G_n=\mathfrak{S}_n[\Gamma]$, the wreath product of the symmetric group with some arbitrary finite group $\Gamma$,
	\item $G_n=GL_n(\mathbb{F}_q)$, the finite general linear group over $\mathbb{F}_q$, or
	\item $G_n=UT_n(\mathbb{F}_q)$ the group of unipotent upper-triangular matrices over $\mathbb{F}_q$.
\end{itemize}

The set of class functions of each of these towers carries out a Hopf structure. For the Hopf algebra structures of the first two see \cite{GR,ZE}, and for the Hopf algebra of the last one see \cite{AABB12,ABT, BT}. Therefore, the set of $d$-Galois class functions of the groups in towers together also have a Hopf structure. Moreover, the first two yield a PSH, and so do their $d$-Galois class functions. \end{Example}

In the next Section, we describe product and coproduct of the Hopf algebra of characters of general linear groups as the focus of this paper is more on the  study of the Galois characters of general linear groups.

\section{The Hopf structure of Galois class functions of $GL_n(\mathbb{F}_q)$}

Fix a prime power $q$. We frequently use the notation $GL_n$ instead of $GL_n(\mathbb{F}_q)$. From now on, fix $d$ in a way that $|GL_k|\mathop{|}d$ and $d\mathop{|}|GL_{k+1}|$ for some non-negative integer $k$.  Let $$\mathsf{cf}(GL_*)=\bigoplus_{n\geq 0}\mathsf{cf}(GL_n),$$ where $$\mathsf{cf}(GL_n)=\mathbb{C}\text{-span}\{ \chi: \chi \in \mathsf{Irr}(GL_n)\}.$$
Embed $GL_i\times GL_j$ into $GL_{i+j}$ as block diagonal matrices whose two diagonal blocks have size $i$ and $j$, respectively. For instance, if $g_i\in GL_i$ and $g_j\in GL_j$, the we map $(g_i,g_j)$ to $$\left[ \begin{array}{cc}g_i&0\\0&g_j\end{array}\right].$$
 Let $P_{i,j}$ denote the prabolic subgroup consisting of the block upper-triangular matrices of the form $$\left[ \begin{array}{cc}g_i&\ell\\0&g_j\end{array}\right]$$ where  $g_i\in GL_i$, $g_j\in GL_j$, and $\ell$ is an $i\times j$ matrix with entries in $\mathbb{F}_q$. Note that $GL_i\times GL_j$ is isomorphic to the quotient $P_{i,j}/U_{i,j}$ where $U_{i,j}$ is the set of matrices of the form $$\left[ \begin{array}{cc}I_i&\ell\\0&I_j\end{array}\right].$$
   For $\chi$, $\psi$, and $\phi$ characters of $GL_{l}$, $GL_i$, $GL_j$, respectively, define
$$m(\psi \otimes \phi)={\rm Ind}_{P_{i,j}}^{GL_{i+j}} {\rm Inf}_{GL_i\times GL_j}^{P_{i,j}} (\psi\otimes \phi)$$
$$\Delta(\chi)=\sum_{i+j=l}{\rm Defl}_{GL_i\times GL_j}^{P_{i,j}} {\rm Res}_{P_{i,j}}^{GL_{i,j}} \chi .$$
Extend both of the functions $m$ and $\Delta$ $\mathbb{C}$-linearly. It is well-known that $\mathsf{cf}(GL_*)$ is a positive self-dual Hopf algebra with the above product and coproduct (see \cite{ZE}).

 Let $$\mathsf{Gcf}_d(GL_*)=\bigoplus_{n\geq 0}\mathsf{Gcf}_d(GL_n),$$ where $$\mathsf{Gcf}_d(GL_n)=\mathbb{C}\text{-span}\{ \chi: \chi \text{~is a ~}d\text{-Galois irreducible character of~}GL_n \}.$$
Then by Theorem \ref{PSH}, and the fact that $\mathsf{cf(GL_*)}$ is a PSH, we have the following corollary.
\begin{corollary}
	$\mathsf{Gcf}_d(GL_*)$ is a PSH.
\end{corollary}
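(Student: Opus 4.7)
The plan is to reduce the statement to a direct application of Theorem \ref{PSH}(2), so the task is essentially to check that the tower $GL_* = (GL_0 < GL_1 < GL_2 < \cdots)$ meets all the hypotheses of that theorem. First I would verify the divisibility hypothesis: it is part of the standing assumption at the start of this section that $|GL_k| \mid d$ and $d \mid |GL_{k+1}|$ for some $k$, which is exactly what Theorem \ref{PSH} requires. Second, I would point out that the product and coproduct defined just above on $\mathsf{cf}(GL_*)$, namely
\[
m = \mathrm{Ind}_{P_{i,j}}^{GL_{i+j}} \circ \mathrm{Inf}_{GL_i \times GL_j}^{P_{i,j}}, \qquad \Delta = \sum_{i+j=l} \mathrm{Defl}_{GL_i \times GL_j}^{P_{i,j}} \circ \mathrm{Res}_{P_{i,j}}^{GL_{i+j}},
\]
are $\mathbb{C}$-linear extensions of compositions of the four functors listed in \eqref{product} and \eqref{coproduct}, as demanded by Theorem \ref{PSH}. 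Finally, $\mathsf{cf}(GL_*)$ is a PSH with PSH-basis $\bigsqcup_{n \geq 0} \mathsf{Irr}(GL_n)$ by Zelevinsky \cite{ZE}, so the PSH hypothesis is also in place.

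With all three hypotheses verified, Theorem \ref{PSH}(2) applies and produces the conclusion: $\mathsf{Gcf}_d(GL_*)$ is a PSH with explicit PSH-basis
\[
\Sigma_d = \left\{ \frac{1}{\sqrt{|\mathsf{O}_d(\psi)|}} \sum_{\phi \in \mathsf{O}_d(\psi)} \phi \; : \; \psi \in \bigsqcup_{n \geq 0} \mathsf{Irr}(GL_n) \right\},
\]
i.e.\ the normalised $\mathsf{Gal}(|GL_n|,d)$-orbit sums of ordinary irreducible characters.

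I do not expect any real obstacle: the genuine content has already been absorbed into Theorem \ref{PSH}, and this corollary is simply the specialisation to the GL-tower. The only implicit thing worth mentioning, if challenged, is that the four functors $\mathrm{Ind}$, $\mathrm{Inf}$, $\mathrm{Res}$, $\mathrm{Defl}$ preserve the subspace of $d$-Galois class functions, so that $m$ and $\Delta$ indeed restrict to $\mathsf{Gcf}_d(GL_*)$; this is already recorded in the paragraph preceding Theorem \ref{PSH}, and follows immediately because each of these operations is computed from character values by averaging or restriction and therefore keeps values inside $\mathbb{Q}[\zeta_d]$.
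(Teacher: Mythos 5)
Your proposal is correct and matches the paper's argument exactly: the paper derives this corollary as an immediate application of Theorem \ref{PSH} together with the fact, recorded just above, that $\mathsf{cf}(GL_*)$ is a PSH with the stated product and coproduct. Your additional remarks verifying the divisibility hypothesis and the preservation of $d$-Galois class functions are just a more explicit spelling-out of the same deduction.
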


We conclude this section with finding all possible numbers $d$ to construct $\mathsf{Gcf}_d(GL_*)$. Let $|\mathbb{F}_q|=p^r$. Consider that $|GL_n|=q^{n-1} q^{n-2} \ldots q (q^n-1)(q^{n-1}-1)\ldots(q-1)$. With a simple calculation it can be seen that 
$$\frac{|GL_n|}{|GL_{n-1}|}=p^{r(n-1)}(p^{rn}-1).$$
Note that the $n$th cyclotomic polynomial is 
$$\varPhi_n(p)=\prod_{ \substack{ 1\leq k \leq n\\ gcd(k,n)=1} }(p-e^{2i\pi\frac{k}{n}}).$$ All coefficients of $\varPhi_n(p)$ are in $\mathbb{Z}$, and moreover, if $D(rn)$ is the set of divisors of $rn$, then  $$p^{rn}-1=\prod_{s\in D}\varPhi_s(p).$$ 
Now, the set of all choices for $d$ to construct $\mathsf{Gcf}_d(GL_*)$ is
$$\left\lbrace  p^{r(n-1)+i} \prod_{s\in A}\varPhi_s(p): A\subseteq D, n\in \mathbb{N}, 1\leq  i\leq r  \right\rbrace.$$
It is worth mentioning  that we always can construct $\mathsf{Gcf}_1(GL_*)$ independent of the field $\mathbb{F}_q$.

\section{Classification of the Galois classes of $GL_n(\mathbb{F}_q)$}
\subsection{Classification of the Galois classes}
  In this section, we will study and parametrize the  Galois classes of the general linear groups. From now on most of our notations are the same as \cite[Chapter IV]{MC}. Fix an algebraic closure of the field $k=\mathbb{F}_q$.
Let $\overline{k}$ be the algebraic closure of $k$. Let $F$ be the Frobenius automorphism of $\overline{k}$ over $k$,
$$
\begin{array}{cccc}
F:& \overline{k}&\rightarrow & \overline{k}\\
& x & \mapsto & x^q.
\end{array}
$$
 For each $n\geq 1$, let $k_n$ be the set of fixed points of $F^n$ in $\overline{k}$, i.e, $$k_n=\{ x\in \overline{k}: F^n(x)=x \}=\{ x\in \overline{k}: x^{q^n}=x \},$$ and note that $k_n$ is isomorphic to $\mathbb{F}_{q^n}$.

Let $\langle F \rangle$ be the cyclic group of the powers of $F$. The group  $\langle F \rangle$ acts on the group $\overline{k}^\times$ by $F^n.x=F^n(x)=x^{q^n}$.

Let $$\Phi=\{ \langle F\rangle\text{-orbits in~} \overline{k}^\times \}.$$
Any monic irreducible polynomial $f$ over $k$ can be uniquely written as
$$f=\prod_{i=0}^{{{s}}-1}(t-x^{q^i}),$$ where $\{ x,x^q,\ldots,x^{q{{s}-1}} \}$ is an element of $\Phi$. So we identify $\Phi$ with the set of monic irreducible polynomials over $k$.

Let $\mathscr{P}$ be the set of all partitions. Any conjugacy class of $GL_n$ is uniquely indexed by a function $$\boldsymbol{\mu}:\Phi \rightarrow  \mathscr{P}$$  with $$\| \boldsymbol{\mu} \|=\sum_{f\in \Phi}  |\boldsymbol{\mu}(f)| {\rm deg}(f)=n.$$
Let $f=t^s-\sum_{i=1}^{s-1}a_it^{i-1}\in \Phi$, and let 
$$J(f)=\left( \begin{array}{ccccc}
0&1&0&\cdots&0\\
0&0&1&\cdots&0\\
\cdots& &&&\cdots\\
0&0&0& \cdots&1\\
a_1&a_2&a_3 & \cdots&a_s
\end{array}\right)$$ the companion matrix for the polynomial $f$.
For any integer $m\geq 1$, define
$$J_m(f)=\left( \begin{array}{ccccc}
J(f)&I_s&0&\cdots&0\\
0&J(f)&I_s&\cdots&0\\
\cdots& &&&\cdots\\
0&0&0& \cdots&I_s\\
0&0&0 & \cdots&J(f)
\end{array}\right).$$ 
  Let $\mu=(\mu_1,\mu_2,\ldots,\mu_{l(\mu)})$ be a partition. Define $$J_\mu(f)=\bigoplus_{i=1}^{l(\mu)}J_{\mu_i}(f).$$ The Jordan canonical form $g_{\boldsymbol{\mu}}$ of a representative of the conjugacy class $c_{\boldsymbol{\mu}}$ indexed by $\boldsymbol{\mu}: \Phi \rightarrow \mathscr{P}$ is 
$$g_{\boldsymbol{\mu}}=\bigoplus_{f\in \Phi}J_{{\boldsymbol{\mu}(f)}}(f)$$

As the Galois classes are the orbits of the action of Galois groups on conjugacy classes $c_{\boldsymbol{\mu}}$, we now identify these orbits.
Before that we need the following lemma.

 \begin{lemma}
	Let $J_n(\lambda)$ denote the $n\times n$ Jordan block matrix with eigenvalue $\lambda$. For each positive integer $l$, $J_n(\lambda)^l$ is the matrix whose $(i,j)$ entry $a_{i,j}$ is given by \[a_{i,j}=\begin{cases}\lambda^{l-(j-i)}\binom{l}{j-i}&\text{if $j\ge i$}\\
	0&\text{otherwise.}\end{cases}\]
\end{lemma}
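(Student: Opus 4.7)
The plan is to decompose $J_n(\lambda) = \lambda I_n + N$, where $N$ is the $n\times n$ nilpotent matrix with $1$'s on the first superdiagonal and $0$'s elsewhere, and then apply the binomial theorem. Since $\lambda I_n$ commutes with $N$, we have
\[
J_n(\lambda)^l = (\lambda I_n + N)^l = \sum_{k=0}^{l}\binom{l}{k}\lambda^{l-k} N^k.
\]

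The key combinatorial ingredient is that $N^k$ has $(i,j)$ entry equal to $1$ when $j - i = k$ and $0$ otherwise; this is an easy induction on $k$, using the fact that right-multiplication by $N$ shifts the columns of a matrix one position to the right. Once this is in hand, reading off the $(i,j)$ entry of the sum above picks out the single term with $k = j - i$, yielding $\binom{l}{j-i}\lambda^{l-(j-i)}$ when $j \ge i$ and $0$ when $j < i$.

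A perfectly viable alternative is a direct induction on $l$, where the inductive step reduces after multiplying $J_n(\lambda)^l \cdot J_n(\lambda)$ to Pascal's identity $\binom{l}{k} + \binom{l}{k-1} = \binom{l+1}{k}$. This route involves slightly more index bookkeeping, so the binomial-theorem argument is cleaner. I expect no real obstacle; the only subtlety worth flagging is the boundary behavior, namely that the formula remains correct when $j - i$ exceeds $\min(l, n-1)$, because $\binom{l}{j-i}$ vanishes once $j - i > l$ and $N^k = 0$ once $k \ge n$, so both the formula and the direct computation give $0$ in that range.
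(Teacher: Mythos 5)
Your argument is correct and is essentially identical to the paper's proof: the paper also writes $J_n(\lambda)=\lambda I_n+E_1$ with $E_1$ the first-superdiagonal matrix, applies the binomial theorem using $E_1^k=E_k$, and reads off the $(i,j)$ entry from the single surviving term. Your additional remarks on the boundary cases $j-i>l$ and $k\ge n$ are a harmless refinement the paper leaves implicit.
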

\begin{proof}
	Let $E_i$ denote the matrix with ones along the $i^{\rm th}$ superdiagonal and zeros elsewhere. Since $E_1^k=E_k$ for all $k$,
	\[J_n(\lambda)^l=(\lambda I_n+E_1)^l=\sum_{k=0}^l\lambda^{l-k}E_1^k=\sum_{k=0}^l\binom{l}{k}\lambda^{l-k}E_k.\]
	Thus, for each $(i,j)$ with $j-i=k$, the $(i,j)$ entry of $J_n(\lambda)^l$ is the $(i,j)$ entry of $\binom{l}{k}\lambda^{l-k}E_k$, which is $\binom{l}{k}\lambda^{l-k}$.
\end{proof}

\begin{proposition}	\label{powers} Let $\lambda$ be in $\overline{k}$. The Jordan form of $J_n(\lambda)^l$ is $J_n(\lambda^l)$ if and only if $l$ is prime to $p=char(k)$.
\end{proposition}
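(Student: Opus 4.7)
The plan is to reduce the claim to a rank computation that uses the explicit formula from the preceding lemma. By that lemma, $J_n(\lambda)^l$ is upper triangular with $\lambda^l$ repeated along the diagonal, so its only eigenvalue is $\lambda^l$ and its Jordan form is a direct sum of blocks $J_{n_i}(\lambda^l)$. Such a form consists of a single block $J_n(\lambda^l)$ if and only if the nilpotent matrix $N:=J_n(\lambda)^l-\lambda^l I$ has nilpotency index $n$, equivalently rank $n-1$. The statement is intended for $\lambda \neq 0$ (since eigenvalues of elements of $GL_n$ are nonzero), so I take $\lambda\in\overline{k}^{\times}$ as a standing assumption.

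The whole argument hinges on the entry of $N$ along the first superdiagonal. By the lemma, the $(i,i+1)$-entry of $N$ is $\binom{l}{1}\lambda^{l-1}=l\lambda^{l-1}$, which is nonzero in $\overline{k}$ precisely when $\gcd(l,p)=1$. For the \emph{if} direction, assume this entry is nonzero. Using the flag $F_j:=\operatorname{span}(e_1,\dots,e_j)$, the strict upper-triangularity gives $NF_j\subseteq F_{j-1}$, and the formula from the lemma shows that $N$ induces multiplication by $l\lambda^{l-1}$ on each successive quotient $F_j/F_{j-1}\to F_{j-1}/F_{j-2}$. A routine induction on the associated graded then yields $N^{n-1}e_n=(l\lambda^{l-1})^{n-1}e_1\neq 0$, so the nilpotency index of $N$ equals $n$ and the Jordan form of $J_n(\lambda)^l$ is indeed $J_n(\lambda^l)$.

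For the \emph{only if} direction, suppose $p\mid l$, so that $l\lambda^{l-1}=0$ in $\overline{k}$ and the first superdiagonal of $N$ vanishes identically. By the lemma, each column $Ne_j$ then lies in $\operatorname{span}(e_1,\dots,e_{j-2})$, and taking the union over $j$ gives $\operatorname{Im}(N)\subseteq\operatorname{span}(e_1,\dots,e_{n-2})$. Hence $\operatorname{rank}(N)\le n-2$ and $\dim\ker N\ge 2$, which forces the Jordan form of $J_n(\lambda)^l$ to decompose into at least two blocks, and therefore not to equal $J_n(\lambda^l)$. I expect the only mildly technical step to be the associated-graded argument in the \emph{if} direction; everything else follows directly from the lemma and the standard rank/nilpotency characterization of a single Jordan block.
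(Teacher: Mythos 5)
Your proof is correct and follows essentially the same route as the paper's: both arguments reduce the question to whether the coefficient $\binom{l}{1}\lambda^{l-1}=l\lambda^{l-1}$ on the first superdiagonal of $N=J_n(\lambda)^l-\lambda^l I$ vanishes, the paper phrasing the criterion via the minimal polynomial and you via the nilpotency index. You in fact supply the justification (the associated-graded computation for the \emph{if} direction and the rank bound for the \emph{only if} direction) that the paper compresses into a one-line assertion, and you are right to record the standing assumption $\lambda\neq 0$, which the paper's argument also silently requires.
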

\begin{proof}
	Since the only eigenvalue of $J_n(\lambda)^l$ is $\lambda^l$, the Jordan form of  $J_n(\lambda)^l$ is $J_n(\lambda^l)$ if and only if the minimal polynomial of $J_n(\lambda)^l$ is $(x-\lambda^l)^n$. Now, since $J_n(\lambda)^l=\sum_{k=0}^l\binom{l}{k}\lambda^{l-k}E_k$ and the minimal polynomial of $E_1$ is $x^n$, the minimal polynomial of $J_n(\lambda)^l$ will be $(x-\lambda^l)^n$ if and only if $0\neq\binom{l}{1}=l$, which happens if and only if $p$ does not divide $l$.
\end{proof}

Let $\sigma\in \mathsf{Gal}{(|GL_n|,d)}$ and $r$ be the positive  integer such that $\sigma(\zeta_{|GL_n|})=\zeta_{|GL_n|}^r$.
Define an action of $\mathsf{Gal}{(|GL_n|,d)}$ on $\overline{k}$ by $\sigma.x=x^r$, and 
for any monic irreducible polynomial $f=\prod_{i=1}^{s-1}(t-x^{q^i})$ define $$\sigma.f=\prod_{i=1}^{s-1}(t-(\sigma.x)^{q^i})=\prod_{i=1}^{s-1}(t-(x^r)^{q^i}).$$ 

 For any $\sigma\in \mathsf{Gal}{(|GL_n|,d)}$ and a partition valued function $\boldsymbol{\mu}$ define
$$
\begin{array}{cccc}
\sigma.\boldsymbol{\mu}:& \Phi& \rightarrow& \mathscr{P}\\
& f & \mapsto& \boldsymbol{\mu}(\sigma^{-1}.f).
\end{array}$$
Let $$\mathsf{O}_d(\boldsymbol{\mu})=\{ \sigma. \boldsymbol{\mu}: \sigma\in \mathsf{Gal}{(|GL_n|,d)}\}.$$

In the following theorem we determine when two conjugacy classes of $GL_n$ are in the same Galois class.
\begin{theorem}
	Let $\boldsymbol{\mu}$, $\boldsymbol{\nu}:\Phi\to\mathscr{P}$ with $\lVert{\boldsymbol{\mu}}\rVert=\lVert{\boldsymbol{\nu}}\rVert=n$. Then the classes $c_{\boldsymbol{\mu}}$ and $c_{\boldsymbol{\nu}}$ lie in the same $d$-Galois class if and only if $$\mathsf{O}_d(\boldsymbol{\mu})=\mathsf{O}_d(\boldsymbol{\nu}).$$ 
\end{theorem}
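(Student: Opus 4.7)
The plan is to show that the Galois action satisfies $\sigma\cdot c_{\boldsymbol{\mu}}=c_{\sigma\cdot\boldsymbol{\mu}}$, where $\sigma\cdot\boldsymbol{\mu}$ is the combinatorial action on partition-valued functions defined just above. Since the conjugacy classes of $GL_n$ are uniquely indexed by functions $\boldsymbol{\mu}\colon\Phi\to\mathscr{P}$, once this identity is in place the theorem is immediate: the classes $c_{\boldsymbol{\mu}}$ and $c_{\boldsymbol{\nu}}$ lie in a common $d$-Galois class precisely when $\boldsymbol{\nu}\in\mathsf{O}_d(\boldsymbol{\mu})$, and this is a symmetric condition on the orbits.

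Fix $\sigma\in\mathsf{Gal}(|GL_n|,d)$ with $\sigma(\zeta_{|GL_n|})=\zeta_{|GL_n|}^r$, so that $\sigma\cdot c_{\boldsymbol{\mu}}$ is the conjugacy class of $g_{\boldsymbol{\mu}}^r$. The core step is to identify the Jordan form of $J_m(f)^r$ over $\overline{k}$. Writing $f=\prod_{i=0}^{s-1}(t-x^{q^i})$, the block matrix $J_m(f)$ is $\overline{k}$-similar to $\bigoplus_{i=0}^{s-1}J_m(x^{q^i})$, with one classical Jordan block of size $m$ at each root of $f$. Proposition~\ref{powers} applies since $\gcd(r,|GL_n|)=1$ forces $\gcd(r,p)=1$, so each $J_m(x^{q^i})^r$ has Jordan form $J_m(x^{rq^i})$. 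This identifies $J_m(f)^r$ over $\overline{k}$ with $\bigoplus_{i=0}^{s-1}J_m(x^{rq^i})$, which is exactly the $\overline{k}$-Jordan form of $J_m(\sigma\cdot f)$. Carrying this identification out block-by-block across the partition $\boldsymbol{\mu}(f)$ for every $f$, and then reindexing the outer direct sum by $g=\sigma\cdot f$, I will obtain
\[g_{\boldsymbol{\mu}}^r\sim_{\overline{k}}\bigoplus_{g\in\Phi}J_{\boldsymbol{\mu}(\sigma^{-1}\cdot g)}(g)=g_{\sigma\cdot\boldsymbol{\mu}}.\]
Since both matrices are defined over $k$, $\overline{k}$-similarity promotes to $k$-similarity by standard rational canonical form theory, delivering $\sigma\cdot c_{\boldsymbol{\mu}}=c_{\sigma\cdot\boldsymbol{\mu}}$.

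The main obstacle is the implicit assertion that $\sigma\cdot f$ again lies in $\Phi$, i.e.\ that it is monic irreducible of degree $s$; equivalently, that the $\langle F\rangle$-orbit of $x^r$ has the same size as the orbit of $x$. This reduces to checking that $x$ and $x^r$ have the same multiplicative order in $\overline{k}^\times$. If $x$ has order $m$, then $m\mid q^s-1\mid |GL_n|$, and since $r$ is coprime to $|GL_n|$ we have $\gcd(r,m)=1$, so $x^r$ has order $m$ as well. Once this is secured, the combinatorial action $\sigma\cdot\boldsymbol{\mu}$ is well defined on $\Phi$-indexed data, the reindexing above goes through, and the theorem follows.
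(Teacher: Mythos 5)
Your proof is correct and follows essentially the same route as the paper's: compute the Jordan form of $g_{\boldsymbol{\mu}}^r$ block-by-block via Proposition \ref{powers} and reindex the outer direct sum to obtain $\sigma.c_{\boldsymbol{\mu}}=c_{\sigma.\boldsymbol{\mu}}$. You are in fact somewhat more careful than the paper on two points it glosses over, namely passing to $\overline{k}$ so that the proposition applies to genuine single-eigenvalue Jordan blocks (and descending back to $k$-similarity afterwards), and checking that $\sigma.f$ is again irreducible of the same degree, i.e.\ that $x$ and $x^r$ have the same multiplicative order.
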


\begin{proof}
	Two conjugacy classes $c_{\boldsymbol{\mu}}$ and $c_{\boldsymbol{\nu}}$ are in the same $d$-Galois class if and only if for some $\sigma\in \mathsf{Gal}{(|GL_n|,d)}$, $\sigma.c_{\boldsymbol{\mu}}=c_{\boldsymbol{\nu}}$. The conjugacy class $c_{\boldsymbol{\mu}}$ contains the element $g_{\boldsymbol{\mu}}$. Note that 
	$$\sigma.g_{\boldsymbol{\mu}}= g_{\boldsymbol{\mu}}^r=\left( \bigoplus_{f\in \Phi}J_{{\boldsymbol{\mu}(f)}}(f)\right)^r,$$ where $r$ is the positive integer such that $\sigma(\zeta_{|GL_n|})=\zeta_{|GL_n|}^r$.
	 We have that $$\left( \bigoplus_{f\in \Phi}J_{{\boldsymbol{\mu}(f)}}(f)\right)^r=\left( \bigoplus_{f\in \Phi}{\left( J_{{\boldsymbol{\mu}(f)}}(f)\right) }^r\right).$$
	Also, 
	$${\left( J_{{\boldsymbol{\mu}(f)}}(f)\right) }^r=\bigoplus_{i=1}^{l(\boldsymbol{\mu}(f))}\left( J_{\boldsymbol{\mu}_i(f)}(f)\right)^r.$$ Moreover, by Proposition \ref{powers}, 
	$$\left( J_{\boldsymbol{\mu}_i(f)}(f)\right)^r=\left( \begin{array}{ccccc}
	J(f)^r&I&0&\cdots&0\\
	0&J(f)^r&I&\cdots&0\\
	\vdots& &&&\vdots\\
	0&0&0& \ddots&I\\
	0&0&0 & \cdots&J(f)^r
	\end{array}\right).$$
Let $f=\prod_{i=1}^{s-1}(t-x^{q^i})$, where $x^{q^{s}}=1$. The Jordan form of $J(f)^r$ is 
$$\left( \begin{array}{ccccc}
x^r&1&0&\cdots&0\\
0&(x^q)^r&1&\cdots&0\\
\vdots& &&\ddots &\vdots\\
0&0&0&\cdots&1\\
0&0&0&\cdots&(x^{q^{d-1}})^r
\end{array}\right).$$ Therefore, $J(f)^r=J(\sigma.f)$.
We have $$\sigma.g_{\boldsymbol{\mu}}= \bigoplus_{f\in \Phi}J_{{\boldsymbol{\mu}(f)}}(\sigma.f)$$
Note that  $$g_{\sigma.\boldsymbol{\mu}}= \bigoplus_{f\in \Phi}J_{{\sigma.\boldsymbol{\mu}(f)}}(f)=\bigoplus_{f\in \Phi}J_{{\boldsymbol{\mu}(\sigma^{-1}.f)}}(f)=\bigoplus_{\sigma.f\in \Phi}J_{{\boldsymbol{\mu}(f)}}(\sigma.f)= \bigoplus_{f\in \Phi}J_{{\boldsymbol{\mu}(f)}}(\sigma.f)=\sigma.g_{\boldsymbol{\mu}}.$$
Therefore, $g_{\sigma.\boldsymbol{\mu}}$ is in $c_{\boldsymbol{\nu}}$, we have ${\sigma.\boldsymbol{\mu}}={\boldsymbol{\nu}}$, and so $$\mathsf{O}_d(\boldsymbol{\mu})=\mathsf{O}_d(\boldsymbol{\nu}).$$ 
\end{proof}

\subsection{Characteristic map} Let $\pi_{\boldsymbol{\mu}}$ be the the characteristic function of the conjugacy class $c_{\boldsymbol{\mu}}$, i.e.,
$$\pi_{\boldsymbol{\mu}}(g)=\begin{cases}
1 & g\in c_{\boldsymbol{\mu}},\\
0& \text{otherwise.}
\end{cases}$$

Let $X_{i,f}$ ($i\geq 1,f\in \Psi$) be independent variables over $\mathbb{C}$. Define $e_n(f)$ to be the elementary symmetric function on the variables $X_{i,f} (i\geq 1)$.
Let $$\mathsf{B}=\mathbb{C}[ e_n(f) : n\geq 1, f\in \Phi].$$
Let $\lambda$ be a partition and $t$ an indeterminate, and let  $P_\lambda(X_f;t)$ be the Hall-Littlewood polynomial. For each $\boldsymbol{\mu}:\Phi\to\mathscr{P}$ such that $\| \boldsymbol{\mu} \|<\infty$ let
$$\tilde{P}_{\boldsymbol{\mu}}=\prod_{f\in \Psi}\tilde{P}_{\boldsymbol{\mu}(f)}(f),$$ where
$\tilde{P}_\lambda(f)=q_f^{-n(\lambda)} P_\lambda(X_f;q_f^{-1})$ and $n(\lambda)=\sum_{i\geq 1}(i-1)\lambda_i$. It is well-known that the characteristic map 
$$\begin{array}{cccc}
ch:&A&\rightarrow & B\\
&\pi_{\boldsymbol{\mu} }&\mapsto&\tilde{P}_{\boldsymbol{\mu}}
\end{array}$$ is an isomorphism (see \cite[(4.1)]{MC}). 
 Let
$$\tilde{P}_{{\mathsf{O}_d(\boldsymbol{\mu})}}=\sum_{\boldsymbol{\nu}\in {\mathsf{O}_d(\boldsymbol{\mu})}}\tilde{P}_{\boldsymbol{\nu}}$$ and

$$ \mathsf{B}_d=\mathbb{C}{\text{-span}}\{ \tilde{P}_{{\mathsf{O}_d(\boldsymbol{\mu})}} : \boldsymbol{\mu} \text{~is a function from $\Phi$ to $\mathscr{P}$} \}.$$
%
%
Let  $\pi_{\mathsf{O}_d(\boldsymbol{\mu})}$ be the characteristic function of the $d$-Galois class containing $c_{\boldsymbol{\mu} }$.

\begin{theorem}
	The map $$
	\begin{array}{cccc}
	ch_d: & \mathsf{Gcf}_d(GL_*) &\rightarrow &\mathsf{B}_d\\
	& \pi_{\mathsf{O}_d(\boldsymbol{\mu})}&\mapsto& \tilde{P}_{{\mathsf{O}_d(\boldsymbol{\mu})}}
	\end{array}$$ is a Hopf isomorphism.
\end{theorem}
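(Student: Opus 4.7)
The plan is to exhibit $ch_d$ as the restriction of the classical characteristic map $ch\colon A\to \mathsf{B}$ of Macdonald \cite[(4.1)]{MC} to the Hopf subalgebra $\mathsf{Gcf}_d(GL_*)\subseteq \mathsf{cf}(GL_*)$, and then to verify that the image of this restriction is precisely $\mathsf{B}_d$. Once this identification is made, the Hopf isomorphism follows almost by inspection from the known fact that $ch$ is a Hopf isomorphism.

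First I would verify the key compatibility at the level of bases. The characteristic function of the $d$-Galois class containing $c_{\boldsymbol{\mu}}$ decomposes as
\[
\pi_{\mathsf{O}_d(\boldsymbol{\mu})}=\sum_{\boldsymbol{\nu}\in \mathsf{O}_d(\boldsymbol{\mu})}\pi_{\boldsymbol{\nu}},
\]
by the classification theorem of the previous subsection. Applying the $\mathbb{C}$-linear map $ch$ term by term and using $ch(\pi_{\boldsymbol{\nu}})=\tilde{P}_{\boldsymbol{\nu}}$ gives
\[
ch(\pi_{\mathsf{O}_d(\boldsymbol{\mu})})=\sum_{\boldsymbol{\nu}\in \mathsf{O}_d(\boldsymbol{\mu})}\tilde{P}_{\boldsymbol{\nu}}=\tilde{P}_{\mathsf{O}_d(\boldsymbol{\mu})}.
\]
Hence $ch_d$ is literally the restriction of $ch$, which establishes well-definedness. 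Since $\{\pi_{\mathsf{O}_d(\boldsymbol{\mu})}\}$ ranges over a basis of $\mathsf{Gcf}_d(GL_*)$ as $\mathsf{O}_d(\boldsymbol{\mu})$ ranges over all $d$-Galois orbits of partition-valued functions, and $ch$ is injective, the images $\{\tilde{P}_{\mathsf{O}_d(\boldsymbol{\mu})}\}$ are linearly independent in $\mathsf{B}$. They span $\mathsf{B}_d$ by definition, so $ch_d$ is a $\mathbb{C}$-linear isomorphism onto $\mathsf{B}_d$.

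Next I would upgrade this linear isomorphism to a Hopf algebra isomorphism. Because $ch$ is itself a Hopf isomorphism, it intertwines product, coproduct, unit, counit, and antipode. By Theorem \ref{PSH}, $\mathsf{Gcf}_d(GL_*)$ is a Hopf subalgebra of $\mathsf{cf}(GL_*)$. Its image under $ch$ is therefore a Hopf subalgebra of $\mathsf{B}$, and by the previous paragraph that image is exactly $\mathsf{B}_d$. Consequently $\mathsf{B}_d$ inherits a Hopf structure from $\mathsf{B}$, and the restriction $ch_d=ch|_{\mathsf{Gcf}_d(GL_*)}$ is automatically a Hopf isomorphism onto this image.

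The only step requiring genuine checking is the decomposition $\pi_{\mathsf{O}_d(\boldsymbol{\mu})}=\sum_{\boldsymbol{\nu}\in \mathsf{O}_d(\boldsymbol{\mu})}\pi_{\boldsymbol{\nu}}$, i.e.\ that the $d$-Galois class corresponding to the orbit $\mathsf{O}_d(\boldsymbol{\mu})$ is genuinely the disjoint union of the conjugacy classes $c_{\boldsymbol{\nu}}$ for $\boldsymbol{\nu}\in \mathsf{O}_d(\boldsymbol{\mu})$, and that this union has no hidden collisions; this is exactly the content of the preceding theorem identifying Galois classes with orbits on partition-valued functions. Everything else is a formal consequence of the fact that $ch$ is already known to be a Hopf isomorphism and that the Galois subspace is closed under the Hopf operations. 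I expect no essential obstacle beyond bookkeeping.
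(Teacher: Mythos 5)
Your proposal is correct and follows essentially the same route as the paper: both identify $ch_d$ as the restriction of Macdonald's characteristic map $ch$ to $\mathsf{Gcf}_d(GL_*)$, compute $ch(\pi_{\mathsf{O}_d(\boldsymbol{\mu})})=\tilde{P}_{\mathsf{O}_d(\boldsymbol{\mu})}$ on the basis, and deduce the isomorphism from the known properties of $ch$. If anything, your write-up is more complete than the paper's, which only addresses surjectivity explicitly and leaves injectivity and the Hopf-compatibility implicit.
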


\begin{proof} 
	Consider that $ch_d$ is the restriction of $ch$ to $\mathsf{Gcf}_d(GL_*)$, and  is therefore surjective. Since 
	 $$ch(\pi_{\mathsf{O}_d(\boldsymbol{\mu})})=ch(\sum_{\boldsymbol{\nu}\in \mathsf{O}_d(\boldsymbol{\mu})} \pi_{\boldsymbol{\nu}})=\sum_{\boldsymbol{\nu}\in\mathsf{O}_d(\boldsymbol{\mu})}ch( \pi_{\boldsymbol{\nu}})=\sum_{\boldsymbol{\nu}\in \mathsf{O}_d(\boldsymbol{\mu})}\tilde{P}_{\boldsymbol{\nu}}=\tilde{P}_{\mathsf{O}_d(\boldsymbol{\mu})}.$$  It follows that $ch_d$ is surjective..
\end{proof}

\section{Classification of the Galois characters of $GL_n(\mathbb{F}_q)$}
\subsection{Classification of the Galois characters}

We will classify the Galois characters of $GL_n$. If $ms=n$, define the following surjective homomorphism,
$$\begin{array}{cccc}
N_{n,m}:& k_n^\times& \rightarrow& k_m^\times\\
& x &\mapsto & \prod_{i=0}^{s-1}x^{q^{mi}}
\end{array}.$$ Let
$$\begin{array}{cccc}
\widehat{N}_{n,m}:& {\rm Hom}(k_m^\times,\mathbb{C}^\times)& \rightarrow& {\rm Hom}(k_n^\times,\mathbb{C}^\times)\\
& \xi&\mapsto & \xi\circ N_{n,m}
\end{array}.$$
Note that $\widehat{N}_{n,m}$ is an injection. By this identification, we define $$L:=\bigcup_{n\geq 1}{\rm Hom}(k_n^\times,\mathbb{C}^\times).$$ 

\begin{remark}
	The groups ${\rm Hom}(k_n^\times,\mathbb{C}^\times)$  and the homomorphisms $\widehat{N}_{n,m}$ form a direct system, and $L$ is the direct limit of this system. Moreover, $L$ is isomorphic to the group of all roots of unity in $\mathbb{C}$ of order prime to $p$. 
\end{remark}
%
%
%

Let $F$ be the Frobenius map on $L$.
The group $ \langle F\rangle$ acts on $L$ as follows,
$$\begin{array}{ccc}
\langle F\rangle \times L & \rightarrow &L\\
(F^s , \xi) & \mapsto & \xi^{q^s},
\end{array}$$
where
$\xi^{q^s}(x)=\xi(x^{q^s})$.
 
Let $\Theta$ be the set of all $\langle F\rangle$-orbits in $L$.
Every irreducible character of $GL_n$ is uniquely indexed by a function $$\boldsymbol{\lambda}:\Theta \rightarrow  \mathscr{P},$$  with $$\| \boldsymbol{\lambda} \|=\sum_{\varphi \in \Theta}  |\boldsymbol{\lambda}(\varphi)| |\varphi|=n.$$

For every $\sigma\in \mathsf{Gal}{(|GL_n|,d)}$ and $\varphi \in \Theta$, define $$\sigma.\varphi=\{\sigma.\xi: \xi \in \varphi\},$$ where $\sigma.\xi(x)=\xi(\sigma.x)=\xi(x^r)$ such that $\sigma(\zeta_{GL_n})=\zeta_{GL_n}^r$. Let $$\mathsf{O}_d(\varphi)=\{\sigma.\varphi: \sigma\in \mathsf{Gal}(|GL_n|,d) \}.$$

For any function ${\boldsymbol{\lambda}}: \Theta \rightarrow \mathscr{P}$ with $\|\boldsymbol{\lambda} \|=n$ and $\sigma\in \mathsf{Gal}{(|GL_n|,d)}$, define $\sigma.\boldsymbol{\lambda}$ as follows,
$$\begin{array}{cccc}
\sigma.\boldsymbol{\lambda}:& \Theta& \rightarrow& \mathscr{P}\\
& \varphi & \mapsto & \sigma.\varphi.
\end{array}$$
Extend the action of $\mathsf{Gal}{(|GL_n|,d)}$ on $\{ \chi^{\boldsymbol{\lambda} }:  \|  \boldsymbol{\lambda}\|=n  \}$ linearly to $\mathsf{cf}(GL_n)$. 
We now see that the actions of Galois group on the product and coproduct of $\mathsf{cf}(GL_*)$ are compatible.
\begin{proposition}\label{pro-copro}
	For any $\chi\in \mathsf{Irr}(GL_{k})$, $\psi\in \mathsf{Irr}(GL_i)$, $\phi\in\mathsf{Irr}(GL_j)$, $\sigma\in \mathsf{Gal}{(GL_{i+j},d)}$, and $\tau\in \mathsf{Gal}{(|GL_{k}|,d)}.$ We have 
	$$\sigma.(m(\chi\otimes \phi))=m(\sigma.\chi\otimes \sigma.\psi) \qquad\text{and}\qquad\Delta(\tau.\chi)=\tau.\Delta(\chi).$$	
\end{proposition}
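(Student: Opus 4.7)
The plan is to reduce both identities to two facts: (i) each of the four operations $\mathrm{Ind}, \mathrm{Res}, \mathrm{Infl}, \mathrm{Defl}$ in \eqref{product}--\eqref{coproduct} commutes with the \emph{pointwise} Galois action $(\sigma * \chi)(g) := \sigma(\chi(g))$, and (ii) the paper's label-based action $\sigma.\chi^{\boldsymbol{\lambda}} = \chi^{\sigma.\boldsymbol{\lambda}}$ coincides with that pointwise action. Once both are in place, the two identities follow by writing $m$ and $\Delta$ as compositions of these four operations and extending $\mathbb{C}$-linearly to all class functions.

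For (ii), I would first observe that the action on $L = \bigcup_{n\geq 1}\mathrm{Hom}(k_n^\times,\mathbb{C}^\times)$ given by $(\sigma.\xi)(x) = \xi(x^r)$ is the same as the pointwise action, since $\xi$ is a group homomorphism into $\mathbb{C}^\times$: $\xi(x^r) = \xi(x)^r = \sigma(\xi(x))$, where $r$ is determined by $\sigma(\zeta_{|GL_n|}) = \zeta_{|GL_n|}^r$. This pushes down to the action on $\Theta$ and hence on partition-valued functions $\boldsymbol{\lambda}:\Theta\to\mathscr{P}$. Combined with Macdonald's formula expressing $\chi^{\boldsymbol{\lambda}}(g_{\boldsymbol{\mu}})$ as a $\mathbb{Q}$-rational combination of values $\xi(x)$ of multiplicative characters appearing in the labels (see \cite[Ch.\ IV, (6.8)]{MC}), this forces $\chi^{\sigma.\boldsymbol{\lambda}}(g) = \sigma(\chi^{\boldsymbol{\lambda}}(g))$ for every $g \in GL_n$.

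For (i), each of $\mathrm{Ind}_H^G,\mathrm{Res}_H^G,\mathrm{Infl}_{G/N}^G,\mathrm{Defl}_{G/N}^G$ is defined in \eqref{product}--\eqref{coproduct} as a $\mathbb{Q}$-rational combination of values of the input character, with coefficients $1, 1/|H|, 1/|N|$ all fixed by $\sigma$. Pulling $\sigma$ through the defining sums gives the commutation, and on tensor products one additionally uses $\sigma((\chi\otimes\phi)(g_1,g_2)) = \sigma(\chi(g_1))\sigma(\phi(g_2)) = (\sigma.\chi\otimes \sigma.\phi)(g_1,g_2)$. Composing these in $m = \mathrm{Ind}_{P_{i,j}}^{GL_{i+j}}\circ \mathrm{Infl}_{GL_i\times GL_j}^{P_{i,j}}$ and $\Delta = \sum_{i+j=k}\mathrm{Defl}_{GL_i\times GL_j}^{P_{i,j}}\circ \mathrm{Res}_{P_{i,j}}^{GL_k}$ yields both identities on irreducibles, and $\mathbb{C}$-linearity of all the maps involved extends the conclusion to $\mathsf{cf}(GL_n)$.

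The main obstacle is (ii): matching the combinatorial label-action with the pointwise action requires tracking roots of unity through Green's construction of the irreducible characters. A minor technicality to dispatch is that $\sigma\in \mathsf{Gal}(|GL_{i+j}|,d)$ must be restricted to an automorphism of $\mathbb{Q}[\zeta_{|GL_i|}]$ and $\mathbb{Q}[\zeta_{|GL_j|}]$ so that $\sigma.\psi$ and $\sigma.\phi$ make sense; this is legitimate because $|GL_i|,|GL_j|\mathop{|}|GL_{i+j}|$ and cyclotomic extensions of $\mathbb{Q}$ are normal, so any such $\sigma$ restricts to an element of $\mathsf{Gal}(|GL_i|,d)$ and $\mathsf{Gal}(|GL_j|,d)$. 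After this bookkeeping, the rest of the proof is formal manipulation of the defining formulas.
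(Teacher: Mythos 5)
Your proposal is correct, and the part of it that actually carries the proof --- your step (i) --- is exactly the paper's argument: the paper simply pulls $\sigma$ through ${\rm Ind}\circ{\rm Infl}$ and through ${\rm Defl}\circ{\rm Res}$, and your observation that each of the four operations in \eqref{product}--\eqref{coproduct} is a $\mathbb{Q}$-rational combination of character values (hence commutes with the pointwise action $\sigma(\chi)$, which is how the action $\sigma.\chi$ is defined in Section \ref{Galois character theory}) is precisely the justification the paper leaves implicit; your remark about restricting $\sigma$ to $\mathsf{Gal}(|GL_i|,d)$ and $\mathsf{Gal}(|GL_j|,d)$ is a legitimate bookkeeping point the paper glosses over. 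Where you diverge is step (ii): identifying the label-based action $\sigma.\boldsymbol{\lambda}$ with the pointwise action is not needed for this proposition at all --- it is the content of the separate, later Proposition \ref{action} ($\sigma.\chi^{\boldsymbol{\lambda}}=\chi^{\sigma.\boldsymbol{\lambda}}$), which the paper proves by passing through the characteristic map and the expansion of Schur functions in power sums, whereas you propose to read it off directly from Green's formula for $\chi^{\boldsymbol{\lambda}}(g_{\boldsymbol{\mu}})$ as a $\mathbb{Q}$-rational combination of values $\xi(x)$. Both routes to (ii) work (your computation $\sigma(\xi(x))=\xi(x)^r=\xi(x^r)$ is the right pivot in either), and your version is arguably more direct, but front-loading it here conflates two results the paper keeps separate; for Proposition \ref{pro-copro} itself you can delete (ii) entirely and the proof stands on (i) alone.
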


\begin{proof}
	Consider that $$\sigma.m(\psi \otimes \phi)=\sigma.\left( {\rm Ind}_{P_{i,j}}^{GL_{i+j}} {\rm Inf}_{GL_i\times GL_j}^{P_{i,j}} (\psi\otimes \phi)\right) ={\rm Ind}_{P_{i,j}}^{GL_{i+j}} {\rm Inf}_{GL_i\times GL_j}^{P_{i,j}} (\sigma.(\psi\otimes \phi))=$$$${\rm Ind}_{P_{i,j}}^{GL_{i+j}} {\rm Inf}_{GL_i\times GL_j}^{P_{i,j}} (\sigma.\psi\otimes \sigma.\phi)=m(\sigma.\psi\otimes \sigma.\phi).$$ Also,

	$$\Delta(\tau.\chi)=\sum_{i+j=k}{\rm Defl}_{GL_i\times GL_j}^{P_{i,j}} {\rm Res}_{P_{i,j}}^{GL_{i,j}} \tau.\chi=
\tau.\sum_{i+j=k}{\rm Defl}_{GL_i\times GL_j}^{P_{i,j}} {\rm Res}_{P_{i,j}}^{GL_{i,j}} \chi=\tau.\Delta(\chi).$$
\end{proof}

We have the following lemma.

\begin{lemma}
	For any $\sigma\in \mathsf{Gal}{(|GL_n|,d)}$ and $\boldsymbol{\mu}\in \Phi$ and $\|  \boldsymbol{\mu}\|=n$, we have $$\sigma.\pi_{\boldsymbol{\mu} }=\pi_{\sigma^{-1}. \boldsymbol{\mu} }.$$
\end{lemma}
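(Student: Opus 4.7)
The proof will be a direct computation combining the definition of the Galois action on class functions with the identity $\sigma.g_{\boldsymbol{\mu}}=g_{\sigma.\boldsymbol{\mu}}$ established inside the proof of the preceding theorem.

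First, I would pin down how the Galois action on $\mathsf{Irr}(GL_n)$ extends to all of $\mathsf{cf}(GL_n)$. For $\chi\in\mathsf{Irr}(GL_n)$ the definition gives $(\sigma.\chi)(g)=\sigma(\chi(g))$. Since $\chi(g)$ is a sum of eigenvalues of $g$ acting in the representation, and these eigenvalues are roots of unity of order dividing $|GL_n|$, and since $\sigma(\zeta_{|GL_n|})=\zeta_{|GL_n|}^r$, one obtains $\sigma(\chi(g))=\chi(g^r)=\chi(\sigma.g)$. Extending $\mathbb{C}$-linearly through the basis $\mathsf{Irr}(GL_n)$ yields the formula $(\sigma.f)(g)=f(\sigma.g)$ for every class function $f\in\mathsf{cf}(GL_n)$, and in particular for the $\{0,1\}$-valued characteristic function $\pi_{\boldsymbol{\mu}}$ of $c_{\boldsymbol{\mu}}$.

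Second, the proof of the preceding theorem established $\sigma.g_{\boldsymbol{\mu}}=g_{\sigma.\boldsymbol{\mu}}$. Since $(hgh^{-1})^r=hg^r h^{-1}$ for every $h\in GL_n$, the map $g\mapsto g^r$ sends conjugacy classes to conjugacy classes, and this elementwise identity therefore lifts to $\sigma.c_{\boldsymbol{\mu}}=c_{\sigma.\boldsymbol{\mu}}$; replacing $\sigma$ by $\sigma^{-1}$ gives $\sigma^{-1}.c_{\boldsymbol{\mu}}=c_{\sigma^{-1}.\boldsymbol{\mu}}$. Combining the two ingredients, for any $g\in GL_n$ one has $(\sigma.\pi_{\boldsymbol{\mu}})(g)=\pi_{\boldsymbol{\mu}}(\sigma.g)$, and this value equals $1$ if and only if $\sigma.g\in c_{\boldsymbol{\mu}}$, equivalently $g\in\sigma^{-1}.c_{\boldsymbol{\mu}}=c_{\sigma^{-1}.\boldsymbol{\mu}}$. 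Hence $\sigma.\pi_{\boldsymbol{\mu}}=\pi_{\sigma^{-1}.\boldsymbol{\mu}}$.

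The main obstacle is purely bookkeeping: keeping track of where the inverse appears when translating between actions on group elements, on conjugacy classes, and on their characteristic functions. Once the convention $(\sigma.f)(g)=f(\sigma.g)$ has been pinned down from the irreducible case (this is the only nontrivial input, and it rests on the observation $\sigma(\chi(g))=\chi(g^r)$), the lemma reduces to the identification $\sigma.c_{\boldsymbol{\mu}}=c_{\sigma.\boldsymbol{\mu}}$, which is essentially the content of the preceding theorem.
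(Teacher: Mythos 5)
Your proposal is correct and follows essentially the same route as the paper: expand $\pi_{\boldsymbol{\mu}}$ in the basis $\mathsf{Irr}(GL_n)$, use the compatibility $(\sigma.\chi)(g)=\chi(\sigma.g)$ on irreducibles (which extends linearly to $(\sigma.\pi_{\boldsymbol{\mu}})(g)=\pi_{\boldsymbol{\mu}}(\sigma.g)$), and then invoke $\sigma^{-1}.c_{\boldsymbol{\mu}}=c_{\sigma^{-1}.\boldsymbol{\mu}}$ from the preceding theorem. Your explicit justification of $\sigma(\chi(g))=\chi(g^r)$ via eigenvalues is a small addition the paper leaves implicit, but the argument is otherwise identical.
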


\begin{proof}
	Let $$\pi_{\boldsymbol{\mu}}=\sum_{    \|\boldsymbol{\lambda}\|=n    }c_{\boldsymbol{\lambda}}\chi^{\boldsymbol{\lambda}}.$$
	Then $$\sigma.\pi_{\boldsymbol{\mu}}(g)=\sum_{    \|\boldsymbol{\lambda}\|=n    }c_{\boldsymbol{\lambda}}\sigma.\chi^{\boldsymbol{\lambda}}(g)=\sum_{    \|\boldsymbol{\lambda}\|=n    }c_{\boldsymbol{\lambda}}\chi^{\boldsymbol{\lambda}}(\sigma.g)=\pi_{\boldsymbol{\mu}}(\sigma.g)=\begin{cases}
	1& \text{if~}\sigma.g\in c_{\boldsymbol{\mu}} \\
	0& \text{otherwise.}
	\end{cases}$$ If  $\sigma.g\in c_{\boldsymbol{\mu}}$, then $g\in \sigma^{-1}.c_{\boldsymbol{\mu}}=c_{\sigma^{-1}.\boldsymbol{\mu}}$.
	Therefore, $$\pi_{\boldsymbol{\mu}}(\sigma.g)=\begin{cases}
	1& \text{if~}g\in c_{\sigma^{-1}.\boldsymbol{\mu}} \\
	0& \text{otherwise,}
	\end{cases}$$ which is the same as $\pi_{\sigma^{-1}.\boldsymbol{\mu}}.$ We can conclude that $$\sigma.\pi_{\boldsymbol{\mu}}=\pi_{\sigma^{-1}.\boldsymbol{\mu}}.$$
\end{proof}


Let $\| \boldsymbol{\lambda} \|=n$. Define $$\mathsf{O}_d(\boldsymbol{\lambda})=\{ \sigma.\boldsymbol{\lambda}: \sigma\in \mathsf{Gal}{(|GL_n|,d)}   \}.$$

We denote by $p_r(\varphi)$ the power-sum of a set of variables $Y_{i,\varphi}$, and we let $S_{\lambda}(\varphi)$ be the Schur function of the set of variables $Y_{i,\varphi}$.
Note that  $ch(\chi^{\boldsymbol{\lambda}})=S_{\boldsymbol{\lambda}}=\prod_{\varphi\in \Theta}S_{\boldsymbol{\lambda}(\varphi)}(\varphi)$. For more details see \cite[Chapter 4, Section 4]{MC}.

%
%
%

\begin{proposition}\label{action}
	Let $\boldsymbol{\lambda}$, $\boldsymbol{\gamma}:\Theta\to\mathscr{P}$ with $\lVert{\boldsymbol{\lambda}}\rVert=\lVert{\boldsymbol{\gamma}}\rVert=n$. Then for every $\sigma\in \mathsf{Gal}(|GL_n|,d)$, $$\sigma.\chi^{\boldsymbol{\lambda}}=\chi^{\sigma.\boldsymbol{\lambda}}.$$
\end{proposition}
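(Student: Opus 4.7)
The plan is to reduce the statement to the symmetry of character values
$$\chi^{\boldsymbol{\lambda}}(g_{\sigma.\boldsymbol{\mu}}) = \chi^{\sigma.\boldsymbol{\lambda}}(g_{\boldsymbol{\mu}})$$
on conjugacy class representatives, and then to establish that symmetry using the PSH structure of $\mathsf{cf}(GL_*)$ together with the Galois equivariance of the Hopf operations proved in Proposition \ref{pro-copro}.

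For the reduction, I would use the standard identity $(\sigma.\chi)(g) = \chi(\sigma.g)$ for a character $\chi$, which holds because character values are sums of roots of unity and $\sigma$ acts on each summand by $\zeta \mapsto \zeta^r$. Combined with the calculation $\sigma.g_{\boldsymbol{\mu}} = g_{\sigma.\boldsymbol{\mu}}$ already carried out in the proof of the earlier theorem classifying Galois classes, this gives $(\sigma.\chi^{\boldsymbol{\lambda}})(g_{\boldsymbol{\mu}}) = \chi^{\boldsymbol{\lambda}}(g_{\sigma.\boldsymbol{\mu}})$, and hence the claim $\sigma.\chi^{\boldsymbol{\lambda}} = \chi^{\sigma.\boldsymbol{\lambda}}$ is equivalent to the displayed symmetry of character values.

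To prove the symmetry, I would appeal to the Zelevinsky tensor product decomposition of $\mathsf{cf}(GL_*)$ into subalgebras, one for each cuspidal irreducible character---that is, each $\chi^{\boldsymbol{\lambda}}$ for which $\boldsymbol{\lambda}$ is supported at a single orbit $\varphi \in \Theta$ with value $(1)$. Proposition \ref{pro-copro} makes the Galois action a Hopf automorphism of $\mathsf{cf}(GL_*)$, and so it must permute the primitives of the PSH basis, which are exactly these cuspidal characters. The crucial step is then to check that this permutation sends the cuspidal indexed by $\varphi$ to the cuspidal indexed by $\sigma.\varphi$. This can be extracted from any explicit construction of a cuspidal out of the torus characters $\xi \in \varphi$---say, via the Green/Deligne--Lusztig construction on the nonsplit Coxeter torus of $GL_{|\varphi|}$---together with the defining action $\sigma.\xi(x) = \xi(x^r)$, which sends $\varphi$ to $\sigma.\varphi$. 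Once the cuspidal case is in hand, the Galois equivariance of product and coproduct propagates the claim through the tensor product decomposition and yields $\sigma.\chi^{\boldsymbol{\lambda}} = \chi^{\sigma.\boldsymbol{\lambda}}$ for arbitrary $\boldsymbol{\lambda}$.

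I expect the hard part to be this identification on cuspidals. A more computational alternative is Macdonald's explicit character formula \cite[Chapter IV, (6.8)]{MC}, which writes $\chi^{\boldsymbol{\lambda}}(g_{\boldsymbol{\mu}})$ as a polynomial in the pairings $\xi(x)$ for $\xi$ an orbit representative in the support of $\boldsymbol{\lambda}$ and $x$ an orbit representative in the support of $\boldsymbol{\mu}$. Since $\sigma(\xi(x)) = \xi(x^r) = (\sigma.\xi)(x)$, the simultaneous Galois action on both the $\boldsymbol{\lambda}$-data and the $\boldsymbol{\mu}$-data leaves the character value invariant, delivering the required symmetry at once and hence the proposition.
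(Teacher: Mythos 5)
Your reduction to the value symmetry $\chi^{\boldsymbol{\lambda}}(g_{\sigma.\boldsymbol{\mu}})=\chi^{\sigma.\boldsymbol{\lambda}}(g_{\boldsymbol{\mu}})$ is sound, and your ``computational alternative'' is in essence the paper's own argument: the paper pushes everything through the characteristic map, reduces (via the power-sum expansion of the Schur functions $S_{\boldsymbol{\lambda}(\varphi)}(\varphi)$) to showing $\sigma.\bigl(ch^{-1}(p_i(\varphi))\bigr)=ch^{-1}(p_i(\sigma.\varphi))$, and then uses Macdonald's explicit expansion of $ch^{-1}(p_i(\varphi))$ as a $\xi(x)$-weighted combination of the class functions $\pi_{(\lambda,f_x)}$, together with the preceding lemma $\sigma.\pi_{\boldsymbol{\mu}}=\pi_{\sigma^{-1}.\boldsymbol{\mu}}$. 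The only difference from your alternative is that the paper works with power sums rather than the full character formula of \cite[Ch.~IV]{MC}; the decisive identity in both cases is $\sigma(\xi(x))=\xi(x^{r})=(\sigma.\xi)(x)$ plus the rationality (in $q$) of all remaining coefficients.

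Your primary, structural route has a genuine gap at the propagation step. Proposition \ref{pro-copro} does make $\sigma$ a PSH automorphism permuting the cuspidals, and granting $\sigma.\chi^{(\square,\varphi)}=\chi^{(\square,\sigma.\varphi)}$ you obtain an induced PSH isomorphism $\mathsf{H}(\chi^{(\square,\varphi)})\to\mathsf{H}(\chi^{(\square,\sigma.\varphi)})$ carrying cuspidal to cuspidal. But Zelevinsky's identification of each $\mathsf{H}(\rho)$ with $\mathsf{Sym}$ is canonical only up to the transpose automorphism $S_\lambda\mapsto S_{\lambda'}$, so this alone yields $\sigma.\chi^{\boldsymbol{\lambda}}=\chi^{\boldsymbol{\lambda}^{\sharp}}$ where, for each $\varphi$, the partition $\boldsymbol{\lambda}^{\sharp}(\sigma.\varphi)$ is either $\boldsymbol{\lambda}(\varphi)$ or its conjugate. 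You need an extra input to exclude the conjugate option, e.g.\ that $\sigma$ fixes character degrees (they are integers) while $\chi^{((2),\varphi)}$ and $\chi^{((1,1),\varphi)}$ have distinct degrees. Note also that identifying the permutation on cuspidals via Green/Deligne--Lusztig is not materially cheaper than the direct computation: the values of $\chi^{(\square,\varphi)}$ are again $\xi(x)$-weighted sums with coefficients in $\mathbb{Q}(q)$, so you end up performing the same calculation the paper does, merely restricted to cuspidals. The structural route is still attractive in that it localizes all the arithmetic to the degree-one-partition case, but as written it is incomplete.
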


\begin{proof}

We have that $ch(\chi^{\sigma.\boldsymbol{\lambda}})=S_{{\sigma.\boldsymbol{\lambda}}}$. If we show that $ch(\sigma.\chi^{\boldsymbol{\lambda}})=S_{\sigma.\boldsymbol{\lambda}}$, we must have $\sigma.\chi^{\boldsymbol{\lambda}}=\chi^{\sigma.\boldsymbol{\lambda}}$.
	
	Note that $S_{\boldsymbol{\lambda}(\varphi)}(\varphi)=\sum_{\mu\vdash |\boldsymbol{\lambda}(\varphi)|}c_\mu(\varphi) p_{\mu_1}(\varphi)\ldots p_{\mu_{l(\mu)}}(\varphi)$, for some coefficients $c_\mu(\varphi)$, and so $$S_{\boldsymbol{\lambda}}=\prod_{\varphi\in \Theta}S_{\boldsymbol{\lambda}(\varphi)}(\varphi)=\prod_{\varphi\in \Theta}\sum_{\mu\vdash |\boldsymbol{\lambda}(\varphi)|}c_\mu(\varphi) p_{\mu_1}(\varphi)\ldots p_{\mu_{l(\mu)}}(\varphi).$$
	In order to prove that $ch(\sigma.\chi^{\boldsymbol{\lambda}})=S_{\sigma.\boldsymbol{\lambda}}$, we only need to show that 
	$$\sigma.\left(  ch^{-1}(p_i(\varphi))\right)=ch^{-1}(p_i(\sigma.\varphi))$$ for every positive integer $i$ and every $\varphi\in \Theta$.

Define ${(\lambda,f)}:\Phi \rightarrow  \mathscr{P}$ such that  it  maps $f$ to  $\lambda$ and others to empty partition. Consider that $ch^{-1}(p_i(\varphi))$ can be written as a linear  combination of characteristic function of conjugacy classes, and 
by \cite[Section IV]{MC} we have that
	$$(-1)^{i|\varphi|-1} \sum_{x\in k_{i|\varphi|}^\times}\xi(x) \sum_{|\lambda|=\frac{i|\varphi|}{{\rm deg}(f_x)}}\prod_{j=1}^{\frac{i|\varphi|}{{\rm deg}(f_x)}} (1-q_{f_x}^{-j}) \pi_{(\lambda, f_{x})}  =ch^{-1}(p_i(\varphi)),$$ 
	where
	$f_x$ is the minimal polynomial of $x$ over the $\mathbb{F}_q$ and $\xi$ is an element of $\varphi$. Note that  ${\rm deg}(f_x)={\rm deg}(f_{\sigma^{-1}.x})$ and  $|\sigma.\varphi|=|\varphi|$. Also,
	$$\sigma.\pi_{(\lambda, f_{x})}=\pi_{(\lambda, \sigma^{-1}.f_{x})}=\pi_{(\lambda, f_{\sigma^{-1}.x})}. 	$$
	Therefore,
	$$\sigma.ch^{-1}(p_i(\varphi))=\sigma. \left((-1)^{i|\varphi|-1} \sum_{x\in k_{i|\varphi|}^\times}\xi(x) \sum_{|\lambda|=\frac{i|\varphi|}{{\rm deg}(f_x)}}\prod_{j=1}^{\frac{i|\varphi|}{{\rm deg}(f_x)}} (1-q_{f_x}^{-j}) \pi_{(\lambda, f_{x})} \right)=$$
	$$
	(-1)^{i|\varphi|-1} \sum_{x\in k_{i|\varphi|}^\times}\xi(x) \sum_{|\lambda|=\frac{i|\varphi|}{{\rm deg}(f_x)}}\prod_{j=1}^{\frac{i|\varphi|}{{\rm deg}(f_x)}} (1-q_{f_x}^{-j}) \sigma.\pi_{(\lambda, f_{x})} =
	$$
	$$
	(-1)^{i|\varphi|-1} \sum_{x\in k_{i|\varphi|}^\times}\xi(x) \sum_{|\lambda|=\frac{i|\varphi|}{{\rm deg}(f_x)}}\prod_{j=1}^{\frac{i|\varphi|}{{\rm deg}(f_x)}} (1-q_{f_x}^{-j}) \pi_{(\lambda, f_{\sigma^{-1}.x})} =
	$$
	$$
	(-1)^{i|\varphi|-1} \sum_{\sigma.y\in k_{i|\varphi|}^\times}\xi(\sigma.y) \sum_{|\lambda|=\frac{i|\varphi|}{{{\rm deg}}(f_y)}}\prod_{j=1}^{\frac{i|\varphi|}{{{\rm deg}}(f_y)}} (1-q_{f_y}^{-j}) \pi_{(\lambda, \sigma^{-1}.f_{y})} =
	$$
	$$
	(-1)^{i|\sigma.\varphi|-1} \sum_{y\in k_{i|\sigma.\varphi|}^\times}\sigma.\xi(y) \sum_{|\lambda|=\frac{i|\sigma.\varphi|}{{\rm deg}(f_y)}}\prod_{j=1}^{\frac{i|\sigma.\varphi|}{{\rm deg}(f_y)}} (1-q_{f_y}^{-j}) \pi_{(\lambda, \sigma^{-1}.f_{y})} =ch^{-1}(p_i(\sigma.\varphi)).
	$$

\end{proof}

\begin{theorem}\label{Galois character}
	Let $\boldsymbol{\lambda}$, $\boldsymbol{\gamma}:\Theta\to\mathscr{P}$ with $\lVert{\boldsymbol{\lambda}}\rVert=\lVert{\boldsymbol{\gamma}}\rVert=n$. Then the irreducible characters $\chi^{\boldsymbol{\lambda}}$ and $\chi^{\boldsymbol{\gamma}}$ are constituent of the same $d$-Galois irreducible character if and only if   $$\mathsf{O}_d(\boldsymbol{\lambda})=\mathsf{O}_d(\boldsymbol{\gamma}).$$
\end{theorem}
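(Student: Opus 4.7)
The plan is to show the theorem as a direct corollary of Proposition \ref{action}, once we translate the statement ``constituent of the same $d$-Galois irreducible character'' into a purely orbit-theoretic condition. By definition, the $d$-Galois irreducible characters of $GL_n$ are precisely the orbit sums $\sum_{\phi \in \mathsf{O}_d(\psi)} \phi$ under the action of $\mathsf{Gal}(|GL_n|,d)$ on $\mathsf{Irr}(GL_n)$. Consequently, two irreducible characters $\chi^{\boldsymbol{\lambda}}$ and $\chi^{\boldsymbol{\gamma}}$ are constituents of the same $d$-Galois irreducible character if and only if there exists some $\sigma \in \mathsf{Gal}(|GL_n|,d)$ with $\sigma.\chi^{\boldsymbol{\lambda}} = \chi^{\boldsymbol{\gamma}}$.

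First I would apply Proposition \ref{action}, which asserts that the Galois action on irreducible characters is compatible with the parametrization by partition-valued functions on $\Theta$; that is, $\sigma.\chi^{\boldsymbol{\lambda}} = \chi^{\sigma.\boldsymbol{\lambda}}$ for every $\sigma \in \mathsf{Gal}(|GL_n|,d)$. Substituting this into the equivalence above, $\chi^{\boldsymbol{\lambda}}$ and $\chi^{\boldsymbol{\gamma}}$ are constituents of the same $d$-Galois irreducible character if and only if $\chi^{\sigma.\boldsymbol{\lambda}} = \chi^{\boldsymbol{\gamma}}$ for some $\sigma \in \mathsf{Gal}(|GL_n|,d)$.

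Next I would invoke the fact that the indexing of irreducible characters of $GL_n$ by partition-valued functions $\boldsymbol{\lambda}:\Theta \to \mathscr{P}$ with $\lVert \boldsymbol{\lambda} \rVert = n$ is a bijection (as recalled in the discussion preceding the theorem). This means the equality $\chi^{\sigma.\boldsymbol{\lambda}} = \chi^{\boldsymbol{\gamma}}$ is equivalent to $\sigma.\boldsymbol{\lambda} = \boldsymbol{\gamma}$. Therefore, $\chi^{\boldsymbol{\lambda}}$ and $\chi^{\boldsymbol{\gamma}}$ are constituents of the same $d$-Galois irreducible character if and only if $\boldsymbol{\gamma}$ lies in the Galois orbit of $\boldsymbol{\lambda}$, which is precisely the condition $\mathsf{O}_d(\boldsymbol{\lambda}) = \mathsf{O}_d(\boldsymbol{\gamma})$.

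There is no substantive obstacle at this stage, since all the genuinely difficult work has already been absorbed into Proposition \ref{action}: the compatibility of the Galois action on characters with the action on the parametrizing data $\Theta \to \mathscr{P}$. The main content of this theorem is just the recognition that, once Proposition \ref{action} is in hand, the condition ``same constituent of a $d$-Galois irreducible character'' is merely the defining equivalence relation for the orbit $\mathsf{O}_d(\boldsymbol{\lambda})$, transported through the bijection $\boldsymbol{\lambda} \leftrightarrow \chi^{\boldsymbol{\lambda}}$.
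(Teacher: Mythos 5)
Your proposal is correct and follows essentially the same route as the paper: both directions reduce to the identity $\sigma.\chi^{\boldsymbol{\lambda}}=\chi^{\sigma.\boldsymbol{\lambda}}$ from Proposition \ref{action} together with the fact that the parametrization $\boldsymbol{\lambda}\mapsto\chi^{\boldsymbol{\lambda}}$ is a bijection (the paper phrases this last point via the characteristic map, $S_{\sigma.\boldsymbol{\lambda}}=S_{\boldsymbol{\gamma}}$ implying $\chi^{\sigma.\boldsymbol{\lambda}}=\chi^{\boldsymbol{\gamma}}$, which is the same content). Your presentation as a single chain of equivalences is a harmless streamlining of the paper's two separate implications.
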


\begin{proof}
	Assume that $\chi^{\boldsymbol{\lambda}}$ and $\chi^{\boldsymbol{\gamma}}$ are constituent of the same $d$-Galois irreducible character, then there exists $\sigma\in \mathsf{Gal}{(|GL_n|,d)}$ such that $\sigma.\chi^{\boldsymbol{\lambda}}=\chi^{\boldsymbol{\gamma}}$. By Lemma \ref{action}, $\sigma.\chi^{\boldsymbol{\lambda}}=\chi^{\sigma.\boldsymbol{\lambda}}$. Consequently, $\chi^{\boldsymbol{\gamma}}=\chi^{\sigma.\boldsymbol{\lambda}}$	
	 and so $\sigma. \boldsymbol{\lambda}=\boldsymbol{\gamma}$. Therefore,
	$$\mathsf{O}_d(\boldsymbol{\lambda})=\mathsf{O}_d(\boldsymbol{\gamma}).$$

	For the converse, assume that $$\mathsf{O}_d(\boldsymbol{\lambda})=\mathsf{O}_d(\boldsymbol{\gamma}),$$ so that  there is some $\sigma\in \mathsf{Gal}(|GL_n|,d)$ such that 
$\sigma.\boldsymbol{\lambda}=\boldsymbol{\gamma}$, then $S_{\sigma.\boldsymbol{\lambda}}=S_{\boldsymbol{\gamma}}$. Then,
$$\chi^{\boldsymbol{\gamma}}=ch^{-1}(S_{\boldsymbol{\gamma}})=ch^{-1}(S_{\sigma.\boldsymbol{\lambda}})=\chi^{\sigma.\boldsymbol{\lambda}}.$$
 and so $\chi^{\boldsymbol{\gamma}}=\chi^{\sigma.\boldsymbol{\lambda}}$. By Proposition \ref{action}, $\chi^{\sigma.\boldsymbol{\lambda}}=\sigma.\chi^{\boldsymbol{\lambda}}$, and so  $\chi^{\boldsymbol{\gamma}}=\sigma.\chi^{\boldsymbol{\lambda}}$. We can conclude that  $\chi^{\boldsymbol{\lambda}}$ and $\chi^{\boldsymbol{\gamma}}$ are constituent of the same $d$-Galois irreducible character.
\end{proof}

\subsection{The image of Galois  characters by characteristic map}

We define $$S_{ \mathsf{O}_d(\boldsymbol{\lambda} ) }=\sum_{ \boldsymbol{\gamma} \in \mathsf{O}_d(\boldsymbol{\lambda} ) } S_{\boldsymbol{\gamma}},$$ and
$$\chi^{ \mathsf{O}_d(\boldsymbol{\lambda} )  }=\sum_{ \boldsymbol{\gamma} \in \mathsf{O}_d(\boldsymbol{\lambda} ) } \chi^{\boldsymbol{\gamma}}.$$
Therefore, $$\mathsf{GIrr}_d(GL_n)=\{\chi^{\mathsf{O}_d(\boldsymbol{\lambda} ) }: \| {\boldsymbol{\lambda}} \|=n  \}.$$
Let $$\mathsf{C}_d=\mathbb{C}\text{-span}\{S_{ \mathsf{O}_d(\boldsymbol{\lambda} ) }: \boldsymbol{\lambda}  \text{~is a function from~}\Theta \text{~to~}\mathscr{P}  \}.$$

\begin{theorem}
	The map $ch_d: \mathsf{Gcf}_d(GL_*)\rightarrow \mathsf{C}_d$ defined by  $$ch(\chi^{\mathsf{O}_d(\boldsymbol{\lambda} ) })=S_{\mathsf{O}_d(\boldsymbol{\lambda} )}$$ is a Hopf isomorphism.
\end{theorem}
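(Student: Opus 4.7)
The plan is to realize $ch_d$ as the restriction of the well-established characteristic map $ch$ to the subspace $\mathsf{Gcf}_d(GL_\ast)$ and then to leverage the Hopf isomorphism property of $ch$ together with Proposition~\ref{pro-copro} to transfer everything to $ch_d$. Concretely, the first observation is that by Theorem~\ref{Galois character} the set $\{\chi^{\mathsf{O}_d(\boldsymbol{\lambda})}\}$, as $\boldsymbol{\lambda}$ ranges over a set of orbit representatives for $\mathsf{Gal}(|GL_n|,d)$ acting on functions $\Theta\to\mathscr{P}$ with $\|\boldsymbol{\lambda}\|=n$, forms a basis of $\mathsf{Gcf}_d(GL_n)$. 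Applying $ch$ termwise gives
\[
ch\bigl(\chi^{\mathsf{O}_d(\boldsymbol{\lambda})}\bigr)=\sum_{\boldsymbol{\gamma}\in\mathsf{O}_d(\boldsymbol{\lambda})}ch(\chi^{\boldsymbol{\gamma}})=\sum_{\boldsymbol{\gamma}\in\mathsf{O}_d(\boldsymbol{\lambda})}S_{\boldsymbol{\gamma}}=S_{\mathsf{O}_d(\boldsymbol{\lambda})},
\]
so the restriction of $ch$ takes values in $\mathsf{C}_d$ and agrees with $ch_d$. Since distinct orbits $\mathsf{O}_d(\boldsymbol{\lambda})$ give disjoint sums of linearly independent Schur functions $S_{\boldsymbol{\gamma}}$, the images $\{S_{\mathsf{O}_d(\boldsymbol{\lambda})}\}$ are linearly independent; hence $ch_d$ is bijective onto $\mathsf{C}_d$ simply because it is a restriction of the injective map $ch$ that surjects onto a spanning set of $\mathsf{C}_d$.

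Next I would verify that $\mathsf{Gcf}_d(GL_\ast)$ is a Hopf subalgebra of $\mathsf{cf}(GL_\ast)$, which is where the real work is. The key input is Proposition~\ref{pro-copro}, which says that the action of $\mathsf{Gal}(|GL_n|,d)$ commutes with both the multiplication $m$ and the comultiplication $\Delta$ inherited from parabolic induction/inflation and restriction/deflation. Given two Galois characters $\chi,\psi$, for any $\sigma$ in the relevant Galois group,
\[
\sigma.m(\chi\otimes\psi)=m(\sigma.\chi\otimes\sigma.\psi)=m(\chi\otimes\psi),
\]
so $m$ preserves the subspace of Galois class functions; similarly $\Delta$ does, once one observes that the direct sum decomposition of $\Delta$ across $i+j=n$ is compatible with the Galois actions on each factor (these actions are all defined through the same root-of-unity exponent $r$). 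Combined with the observation that $\mathsf{Gcf}_d(GL_\ast)$ is closed under the unit and counit (the trivial character and augmentation are already rational), this shows $\mathsf{Gcf}_d(GL_\ast)$ is a graded Hopf subalgebra.

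Once that is in place, the argument concludes in one line: the restriction of a Hopf algebra isomorphism to a Hopf subalgebra is a Hopf algebra isomorphism onto its image. Since $ch$ is a Hopf isomorphism and $ch_d$ is its restriction to the Hopf subalgebra $\mathsf{Gcf}_d(GL_\ast)$, the image $\mathsf{C}_d$ is automatically a Hopf subalgebra of $\mathsf{B}$ and $ch_d:\mathsf{Gcf}_d(GL_\ast)\to\mathsf{C}_d$ is a Hopf isomorphism. I expect the main obstacle to be not the bijectivity, which is essentially bookkeeping through the orbit structure of Theorem~\ref{Galois character}, but rather the closure under the coproduct: one must carefully track how the Galois action, originally defined on a fixed $GL_n$, restricts compatibly to each $GL_i\times GL_j$ in the deflation of the restriction, so that Proposition~\ref{pro-copro} truly implies $\Delta(\mathsf{Gcf}_d(GL_\ast))\subseteq\mathsf{Gcf}_d(GL_\ast)\otimes\mathsf{Gcf}_d(GL_\ast)$ rather than only $\sigma$-invariance of the total coproduct.
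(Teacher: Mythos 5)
Your overall strategy --- realize $ch_d$ as the restriction of the classical characteristic map $ch$, compute $ch(\chi^{\mathsf{O}_d(\boldsymbol{\lambda})})=\sum_{\boldsymbol{\gamma}\in\mathsf{O}_d(\boldsymbol{\lambda})}S_{\boldsymbol{\gamma}}=S_{\mathsf{O}_d(\boldsymbol{\lambda})}$ by linearity, and get bijectivity from injectivity of $ch$ plus the fact that the images span $\mathsf{C}_d$ --- is exactly the route the paper takes for the companion statement about $\pi_{\mathsf{O}_d(\boldsymbol{\mu})}\mapsto\tilde{P}_{\mathsf{O}_d(\boldsymbol{\mu})}$; the paper gives no separate proof of the present theorem and simply leans on Theorem \ref{PSH} for the Hopf structure. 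That part of your argument is correct, as is the closure of $\mathsf{Gcf}_d(GL_*)$ under the product: $m(\chi\otimes\psi)$ is fixed by every $\sigma$ by Proposition \ref{pro-copro}, hence is a $d$-Galois class function of $GL_{i+j}$.

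The step you flag as ``the main obstacle'' is, however, a genuine gap, and it is not one that careful bookkeeping will close: the containment $\Delta(\mathsf{Gcf}_d(GL_*))\subseteq\mathsf{Gcf}_d(GL_*)\otimes\mathsf{Gcf}_d(GL_*)$ is false in general. Proposition \ref{pro-copro} only yields that $\Delta(\chi)$ is invariant under the \emph{diagonal} Galois action on $\mathsf{cf}(GL_i)\otimes\mathsf{cf}(GL_j)$, and the diagonal invariants of a tensor product are strictly larger than the tensor product of the invariants. Concretely, take $q=5$ and $d=1$, let $\xi$ generate ${\rm Hom}(\mathbb{F}_5^{\times},\mathbb{C}^{\times})\cong\mathbb{Z}/4$, and let $\boldsymbol{\lambda}$ send the degree-one cuspidals $\xi$ and $\xi^{3}$ to $(1)$ and everything else to the empty partition. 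Since every admissible $r$ is odd, $\sigma.\boldsymbol{\lambda}=\boldsymbol{\lambda}$ for all $\sigma$, so $\chi^{\boldsymbol{\lambda}}\in\mathsf{Gcf}_1(GL_2)$ is a single rational-valued irreducible. By self-duality the coefficient of $\psi\otimes\rho$ in the $(1,1)$-component of $\Delta(\chi^{\boldsymbol{\lambda}})$ is $\langle\chi^{\boldsymbol{\lambda}},m(\psi\otimes\rho)\rangle$, which gives exactly $\chi^{(\square,\xi)}\otimes\chi^{(\square,\xi^{3})}+\chi^{(\square,\xi^{3})}\otimes\chi^{(\square,\xi)}$. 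But $\mathsf{Gcf}_1(GL_1)\otimes\mathsf{Gcf}_1(GL_1)$ meets the span of $\{\chi^{(\square,\xi)},\chi^{(\square,\xi^{3})}\}\otimes\{\chi^{(\square,\xi)},\chi^{(\square,\xi^{3})}\}$ only in multiples of $\left(\chi^{(\square,\xi)}+\chi^{(\square,\xi^{3})}\right)\otimes\left(\chi^{(\square,\xi)}+\chi^{(\square,\xi^{3})}\right)$, so this element does not lie there. What is true is that $\Delta(\chi)$ lies in $\bigoplus_{i+j=n}\mathsf{Gcf}_d(GL_i\times GL_j)$, which properly contains $\bigoplus_{i+j=n}\mathsf{Gcf}_d(GL_i)\otimes\mathsf{Gcf}_d(GL_j)$. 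Consequently your final one-line conclusion (``the restriction of a Hopf isomorphism to a Hopf subalgebra is a Hopf isomorphism onto its image'') cannot be applied as stated; to salvage it one must either reinterpret the coproduct on $\mathsf{Gcf}_d$ (and correspondingly on $\mathsf{C}_d$) as valued in these larger spaces, or otherwise modify the claimed coalgebra structure --- an issue that the paper's own Theorem \ref{PSH} also leaves unaddressed.
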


\section{Galois Cuspidal Characters}

It is known that when we consider $\mathsf{cf}(GL_*)$ as a PSH with PSH-basis $\Sigma=\{\chi^{ \boldsymbol{\lambda} }: \boldsymbol{\lambda}  \text{~is a function from~}\Theta \text{~to~}\mathscr{P}  \}$ (see \cite[Chapter 4]{GR} and \cite[Chapter II]{ZE}), $$\mathcal{C}=\Sigma\cap \mathfrak{p}=\{ \chi^{(\square,\varphi)}: \varphi\in \Theta \},$$ where $(\square,\varphi)$ is a function from $\Theta$ to $\mathscr{P}$ which maps $\varphi$ to $(1)$ and others to empty partition. Let $\Sigma_d$ be the PSH-basis of $\mathsf{Gcf}_d(GL_*)$. Any element of  $\mathcal{C}$ is called a cuspidal character. We call an element of 
$$\mathcal{C}_d=\Sigma_d\cap \mathfrak{p}$$ a $d$-Galois cuspidal character. We warn that the elements of $\mathcal{C}_d$ are not necessarily characters.

\begin{corollary}\label{Gsucpidal}
	Let $\sigma\in \mathsf{Gal}{(|GL_n|,d)}$, then $$\sigma.\chi^{(\square,\varphi)}=\chi^{(\square,\sigma.\varphi)}.$$ Moreover, $$\mathcal{C}_{d}=\left\lbrace \left( 1/\sqrt{\mathsf{O}_{d}(\varphi)} \right)  \sum_{\vartheta \in \mathsf{O}_{d}(\varphi)}  \chi^{(\square,\vartheta)}  : \varphi\in \Theta  \right\rbrace.$$
\end{corollary}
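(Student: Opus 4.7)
The strategy is to handle the two assertions sequentially: the first is an essentially immediate corollary of Proposition \ref{action}, and the second follows by combining the first with positivity of the coproduct and the Galois-equivariance recorded in Proposition \ref{pro-copro}.

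For the first assertion, I would apply Proposition \ref{action} directly with the partition-valued function $\boldsymbol{\lambda}=(\square,\varphi)$, which sends $\varphi$ to $(1)$ and every other $\langle F\rangle$-orbit in $L$ to the empty partition. Unpacking the definition of the Galois action on functions $\Theta\to\mathscr{P}$, the function $\sigma.(\square,\varphi)$ sends $\sigma.\varphi$ to $(1)$ and everything else to the empty partition, so $\sigma.(\square,\varphi)=(\square,\sigma.\varphi)$ as functions from $\Theta$ to $\mathscr{P}$. Proposition \ref{action} then gives $\sigma.\chi^{(\square,\varphi)}=\chi^{\sigma.(\square,\varphi)}=\chi^{(\square,\sigma.\varphi)}$. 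As a consequence, the Galois orbit of $\chi^{(\square,\varphi)}$ under $\mathsf{Gal}(|GL_n|,d)$ is exactly $\{\chi^{(\square,\vartheta)}:\vartheta\in\mathsf{O}_d(\varphi)\}$, and this orbit has cardinality $|\mathsf{O}_d(\varphi)|$.

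For the second assertion, I would characterize which elements of the PSH-basis $\Sigma_d$ lie in $\mathfrak{p}$. A typical element of $\Sigma_d$ is of the form $\beta_{\boldsymbol{\lambda}}=(1/\sqrt{|\mathsf{O}_d(\boldsymbol{\lambda})|})\sum_{\boldsymbol{\gamma}\in\mathsf{O}_d(\boldsymbol{\lambda})}\chi^{\boldsymbol{\gamma}}$, and primitivity means $\Delta_+(\beta_{\boldsymbol{\lambda}})=0$. Because $\mathsf{cf}(GL_*)$ is a PSH on the basis $\Sigma=\{\chi^{\boldsymbol{\gamma}}\}$, each $\Delta_+(\chi^{\boldsymbol{\gamma}})$ expands with nonnegative real coefficients in the basis $\chi^{\boldsymbol{\alpha}}\otimes\chi^{\boldsymbol{\delta}}$ of $\mathsf{cf}(GL_*)\otimes\mathsf{cf}(GL_*)$. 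No cancellation is possible when we sum over the Galois orbit, so $\Delta_+(\beta_{\boldsymbol{\lambda}})=0$ forces $\Delta_+(\chi^{\boldsymbol{\gamma}})=0$ for every $\boldsymbol{\gamma}\in\mathsf{O}_d(\boldsymbol{\lambda})$. Since the primitives of $\Sigma$ are precisely $\{\chi^{(\square,\varphi)}:\varphi\in\Theta\}$, this forces every $\boldsymbol{\gamma}$ in the orbit to have the form $(\square,\vartheta)$ for some $\vartheta\in\Theta$, i.e.\ $\boldsymbol{\lambda}=(\square,\varphi)$ for some $\varphi$.

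Finally, combining this with the first assertion, the orbit of $(\square,\varphi)$ is $\{(\square,\vartheta):\vartheta\in\mathsf{O}_d(\varphi)\}$ of size $|\mathsf{O}_d(\varphi)|$, which yields the stated description
\[\mathcal{C}_d=\left\{\left(1/\sqrt{\mathsf{O}_d(\varphi)}\right)\sum_{\vartheta\in\mathsf{O}_d(\varphi)}\chi^{(\square,\vartheta)}:\varphi\in\Theta\right\}.\]
The only subtle point is the no-cancellation argument in the orbit sum, which is where the positivity axiom of a PSH is really doing the work; everything else is bookkeeping once Proposition \ref{action} and the known list of cuspidal characters of $\mathsf{cf}(GL_*)$ are in hand.
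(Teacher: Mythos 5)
Your proposal is correct and follows essentially the same route as the paper: the first identity comes from Proposition \ref{action} applied to $\boldsymbol{\lambda}=(\square,\varphi)$, and the description of $\mathcal{C}_d$ comes from the positivity/no-cancellation argument showing that primitivity of a Galois orbit sum forces each $\chi^{\boldsymbol{\gamma}}$ in the orbit to be primitive, hence cuspidal in $\mathsf{cf}(GL_*)$. The only cosmetic difference is that the paper spells out the forward inclusion (that orbit sums of cuspidals are primitive) via an explicit coproduct computation using Proposition \ref{pro-copro}, which you leave implicit but which follows immediately from your first assertion.
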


\begin{proof}
	If $\varphi\in \Theta$ with $|\varphi|=n$ and $\sigma\in \mathsf{Gal}(|GL_n|,d)$, then by Proposition \ref{pro-copro}, $$\Delta(\sigma.\chi^{(\square,\varphi)})=\sigma.\Delta(\chi^{(\square,\varphi)})=\sigma.(1\otimes \chi^{(\square,\varphi)}+\chi^{(\square,\varphi)}\otimes 1)=$$$$1\otimes \sigma.\chi^{(\square,\varphi)}+\sigma.\chi^{(\square,\varphi)}\otimes 1=1\otimes \chi^{(\square,\sigma.\varphi)}+\chi^{(\square,\sigma.\varphi)}\otimes 1.$$ Therefore, 
	$$\Delta\left(\left( 1/\sqrt{\mathsf{O}_{d}(\varphi)} \right)  \sum_{\vartheta \in \mathsf{O}_{d}(\varphi)}  \chi^{(\square,\vartheta)} \right)=$$
	$$\left(  1/\sqrt{\mathsf{O}_{d}(\varphi)} \right)  \sum_{\vartheta \in \mathsf{O}_{d}(\varphi)}  1\otimes \chi^{(\square,\vartheta)} +\left( 1/\sqrt{\mathsf{O}_{d}(\varphi)} \right)  \sum_{\vartheta \in \mathsf{O}_{d}(\varphi)}  \chi^{(\square,\vartheta)} \otimes 1,$$ and so $$\left( 1/\sqrt{\mathsf{O}_{d}(\varphi)} \right)  \sum_{\vartheta \in \mathsf{O}_{d}(\varphi)}  \chi^{(\square,\vartheta)} \in \Sigma_d\cap\mathfrak{p}.$$
	Now we want to show that if $ \left( 1/\sqrt{\mathsf{O}_{d}(\varphi)} \right)  \sum_{\phi\in \mathsf{O}_{d}(\psi)}  \phi$ is in $\mathcal{C}_d$,	then $\psi\in \mathcal{C}$. 
	Consider that 
	$$ \left( 1/\sqrt{\mathsf{O}_{d}(\varphi)} \right)  \sum_{\phi\in \mathsf{O}_{d}(\psi)}  \phi=\left( 1/\sqrt{\mathsf{O}_{d}(\varphi)} \right) \sum_{\sigma\in A}\sigma.\psi$$ for some $A\subseteq \mathsf{Gal}(|GL_n|,d)$.

	$$\Delta\left( \left( 1/\sqrt{\mathsf{O}_{d}(\varphi)} \right) \sum_{\sigma\in A}\sigma.\psi\right) = \left(1/\sqrt{\mathsf{O}_{d}(\varphi)} \right) \sum_{\sigma\in A}\Delta\left( \sigma.\psi\right) $$
	$$=1\otimes   \left( 1/\sqrt{\mathsf{O}_{d}(\varphi)} \right) \sum_{\sigma\in A}\sigma.\psi+ \left( 1/\sqrt{\mathsf{O}_{d}(\varphi)} \right) \sum_{\sigma\in A}\sigma.\psi \otimes 1.$$
	Note that for every $\sigma\in A$, $\Delta(\sigma.\chi)$ is a non-negative linear combination of $d$-Galois irreducible characters. Therefore, it is not possible to have the equation above if we do not have   $\Delta(\sigma.\psi)=\sigma.\psi\otimes 1+1\otimes \sigma.\psi$ for every $\sigma\in A$. Therefore, $\psi\in \mathcal{C}$.
\end{proof}

\begin{remark}
	A result similar to above corollary can be shown for any PSH in Theorem \ref{PSH}. 
\end{remark}
The following corollary follows from the discussion above and Section \ref{SPSH}.

\begin{corollary}
Let $\mathcal{C}_d$ be as in Corollary \ref{Gsucpidal}. Then   $$\mathsf{Gcf}_d(GL_*)\cong \bigotimes_{\rho\in \mathcal{C}_d} \mathsf{Gcf}_d(GL_*)(\rho).$$
\end{corollary}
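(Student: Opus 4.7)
The plan is to invoke the general structure theorem for positive self-dual Hopf algebras that was established in Section \ref{SPSH}, applied to the specific PSH $\mathsf{Gcf}_d(GL_*)$. The framework has already been set up so that essentially no new argument is needed; the proof is a matter of verifying that the hypotheses of the abstract theorem are met in this setting.

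First I would recall that $\mathsf{Gcf}_d(GL_*)$ is a PSH. This was established by the corollary following Theorem \ref{PSH}, using the fact that $\mathsf{cf}(GL_*)$ is a PSH and that Theorem \ref{PSH} transfers the PSH structure to the $d$-Galois subalgebra with PSH-basis $\Sigma_d$. Next I would recall the abstract structure statement boxed in Section \ref{SPSH}: for any PSH $\mathsf{H}$ with PSH-basis $\Sigma$, if $\mathcal{C}=\Sigma\cap\mathfrak{p}$ denotes the distinguished primitive elements, then
$$\mathsf{H}\cong\bigotimes_{\rho\in\mathcal{C}}\mathsf{H}(\rho),$$
where $\mathsf{H}(\rho)$ is the $\mathbb{C}$-span of $\{\beta\in\Sigma : \langle\beta,\rho^n\rangle\neq 0 \text{ for some } n\geq 0\}$.

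Applying this with $\mathsf{H}=\mathsf{Gcf}_d(GL_*)$ and $\Sigma=\Sigma_d$, the set $\Sigma_d\cap\mathfrak{p}$ is by definition exactly $\mathcal{C}_d$, and Corollary \ref{Gsucpidal} gives an explicit description of its elements as the normalized Galois orbit sums of cuspidal characters. Thus the abstract decomposition specializes to
$$\mathsf{Gcf}_d(GL_*)\cong\bigotimes_{\rho\in\mathcal{C}_d}\mathsf{Gcf}_d(GL_*)(\rho),$$
which is the desired conclusion.

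There is essentially no obstacle here; the work has all been done upstream. The one thing I would check carefully is the definition of $\mathsf{Gcf}_d(GL_*)(\rho)$: it should be read as the $\mathbb{C}$-span of those PSH-basis elements $\beta\in\Sigma_d$ such that $\langle\beta,\rho^n\rangle\neq 0$ for some $n$, matching the notation $\mathsf{H}(\rho)$ from Section \ref{SPSH}. With this reading, the corollary is a direct specialization of the general theorem and the proof is a one-line invocation.
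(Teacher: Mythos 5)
Your proposal matches the paper's own reasoning: the paper states the corollary ``follows from the discussion above and Section \ref{SPSH},'' i.e.\ it is exactly the specialization of the general PSH decomposition $\mathsf{H}\cong\bigotimes_{\rho\in\mathcal{C}}\mathsf{H}(\rho)$ to $\mathsf{H}=\mathsf{Gcf}_d(GL_*)$ with PSH-basis $\Sigma_d$ and $\mathcal{C}_d=\Sigma_d\cap\mathfrak{p}$ as identified in Corollary \ref{Gsucpidal}. Your careful reading of $\mathsf{Gcf}_d(GL_*)(\rho)$ as the span of basis elements pairing nontrivially with some power of $\rho$ is the intended one, so the proof is correct and essentially identical to the paper's.
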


\section{Related problems}

We conclude this paper with a list of open problems. Let $p$ be a prime and let $q$ be a prime
power.
\begin{enumerate}
	\item The character theory of the family of unipotent upper-triangular matrices over a
finite field $\mathbb{F}_q$, $UT_n(q)$ is known to be wild. Is there a tame Galois character theory
for the family of the unipotent upper-triangular matrices?
\item  The supercharacter theory defined by Diaconis and Isaacs \cite{DI08} for $UT_n(q)$ is a $p$-Galois supercharacter theory. 
Can we precisely express the supercharacters of the supercharacter
theory by Diaconis and Isaacs as a linear combination of $p$-Galois irreducible
characters?
\item Can we count the number of Galois irreducible characters for the family of unipotent
upper-triangular matrices? Is it a polynomial of $p$ and $n$?

\item  Any normal supercharacter theory is integral \cite{AL}, so any normal subgroup is a
union of $1$-Galois classes, and any supercharacter of a normal supercharacter theory
is a non-negative linear combination of $1$-Galois irreducible characters. Can we precisely
express a supercharacter of a normal supercharacter theory for $UT_n(q)$ as a linear
combination of $1$-Galois irreducible characters?

\end{enumerate}

\end{document}